\documentclass[a4paper,11pt,twoside]{amsart}
\usepackage[english]{babel}
\usepackage[utf8]{inputenc}

\usepackage[a4paper,inner=3cm,outer=3cm,top=4cm,bottom=4cm]{geometry}
\usepackage{fancyhdr}
\usepackage{bold-extra}
\usepackage{mathrsfs}
\usepackage{comment}
\usepackage{graphics}
\usepackage{aliascnt}
\usepackage{hyperref}
\usepackage{amsmath}
\usepackage{amsfonts}
\usepackage{amssymb}
\usepackage{amsthm}
\usepackage{mathtools}
\usepackage{enumerate}
\usepackage{enumitem}

\newtheorem{theorem}{Theorem}[section]
\newtheorem{corollary}[theorem]{Corollary}
\newtheorem{lemma}[theorem]{Lemma}
\newtheorem{proposition}[theorem]{Proposition}
\theoremstyle{definition}

\numberwithin{equation}{section}

\newcommand\eps{\varepsilon}
\newcommand\R{\mathbb{R}}
\newcommand\Z{\mathbb{Z}}

\newcommand\C{\mathbb{C}}
\newcommand\T{\mathbb{T}}
\newcommand\wh{\widehat}
\newcommand\wt{\widetilde}
\newcommand\supp{\mathrm{supp}}
\newcommand\1{\mathbf{1}}

\hypersetup{hidelinks,pdftitle={Higher additive energies on discrete cubes},pdfauthor={Xuancheng Shao and Yu-Chen Sun}}

\begin{document}
\title{Higher additive energies on discrete cubes}
\author{Xuancheng Shao}
\address{Department of Mathematics, University of Kentucky\\
Lexington, KY 40506\\
USA}
\email{xuancheng.shao@uky.edu}
\thanks{XS was supported by NSF grant DMS-2452462}

\author{Yu-Chen Sun}
\address{
Department of Mathematics,
University of Bristol, Woodland Rd, Bristol BS8 1UG}
\email{yuchensun93@163.com}

\begin{abstract}
Let $m,n\geq2$ be integers. We study the least exponent $t_{m,n}$ such that the $m$-fold additive energy of any subset $A$ of the discrete cube $\{0,1,\cdots,n-1\}^d$ in any dimension $d$ satisfies $E_m(A)\leq |A|^{t_{m,n}}$. For every fixed $m$, we obtain the asymptotic formula
$$
 t_{m,n}=2m-1-(1+o(1))\log_n
 \frac{(2m-1)^{m-1/2}}{(2m)^{m-1}}
 \qquad(n\to\infty).
$$
For $m=2$, this gives
$$
t_{2,n} = 3 - (1+o(1)) \log_n\frac{3\sqrt{3}}{4},
$$
proving a conjecture of the first author.
\end{abstract}
\maketitle
%\tableofcontents

\section{Introduction}

For a nonempty finite set $A$ in an abelian group and an integer $m\geq2$, its
$m$-fold additive energy is defined by
$$
 E_m(A)=\big|\{(a_1,\ldots,a_{2m})\in A^{2m}:
 a_1+\cdots+a_m=a_{m+1}+\cdots+a_{2m}\}\big|.
$$
When $m=2$, this is the usual additive energy. The trivial bounds
$|A|^m\leq E_m(A)\leq |A|^{2m-1}$ hold for all $A$.
Understanding the structure of sets whose additive energy is large
relative to this upper bound is a central theme in additive
combinatorics. For $m=2$, the Balog--Szemer\'edi--Gowers theorem
produces a large subset with small doubling, and Freiman-type
theorems describe the structure of such subsets; see \cite{tao-vu}.
In this paper we study the improvement in the upper bound when
$A$ is contained in a discrete cube.

For integers $m,n\geq2$, let $t_{m,n}$ be the least real number such that
$$
 E_m(A)\leq |A|^{t_{m,n}}
$$
for every $d\geq1$ and every $A\subset\{0,1,\ldots,n-1\}^d$.
Kane and Tao \cite{kane-tao} proved that
$t_{2,2}=\log_2 6$. De Dios Pont, Greenfeld, Ivanisvili and
Madrid \cite{dios-pont-et-al} extended this identity to
$$
 t_{m,2}=\log_2\binom{2m}{m}\qquad(2\leq m\leq10),
$$
and Kova\v{c} \cite{kovac} subsequently proved the
same formula for every $m\geq2$. In these cases, the optimal exponent $t_{m,2}$ is attained when $A$ is the full binary cube $\{0,1\}^d$. For the ternary cube, however,
it was shown in \cite{dios-pont-et-al}
that $t_{2,3}>\log_3 19$, so the full cube is no longer extremal.
They also related the optimal exponent to a one-dimensional
Fourier inequality (see
\eqref{eq:HY-threshold} below).

The case $m=2$ and large $n$ was studied further by the first author in
\cite{shao-cubes}. It was proved there that
$$
 3-(1+o(1))\log_n\frac{3\sqrt3}{4}
 \leq t_{2,n}\leq 3-\log_n(1+c)
$$
for some absolute constant $c>0$, and it was conjectured that the
lower bound gives the correct asymptotic. Long
\cite[Corollary 1.2]{long} recently constructed explicit sets
attaining this lower bound, using lattice points in Euclidean
balls. We prove
the conjectured upper bound and obtain the analogous asymptotic
for every fixed $m\geq2$.

\begin{corollary}\label{cor:energy}
For every fixed integer $m\geq2$,
$$
 t_{m,n}=2m-1-(1+o(1))\log_n \frac{(2m-1)^{m-1/2}}{(2m)^{m-1}}
 \qquad(n\to\infty).
$$
In particular,
$$
 t_{2,n}=3-(1+o(1))\log_n\frac{3\sqrt3}{4}.
$$
\end{corollary}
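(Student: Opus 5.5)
Both directions reduce to one-dimensional problems via tensor products. For the lower bound I will take $A=B^k\subset\{0,\dots,n-1\}^{dk}$, where $B\subset\{0,\dots,n-1\}^d$ is a ball-type set. Since $E_m(B^k)=E_m(B)^k$ and $|B^k|=|B|^k$, any single such $B$ gives $t_{m,n}\ge \log E_m(B)/\log|B|$. Following Long and \cite{shao-cubes}, I choose $B$ as the lattice points of $\{0,\dots,n-1\}^d$ on a thin spherical shell around the centre, with $d$ large. Heuristically, $a_1+\dots+a_m-a_{m+1}-\dots-a_{2m}$ is a sum of $2m$ nearly independent, nearly uniform vectors, so the local limit theorem gives density $\approx (2\pi(2m)\sigma^2)^{-d/2}$ at $0$. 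Conditioning on the norms then produces Gaussian-ball factors whose ratio yields the constant $(2m-1)^{m-1/2}/(2m)^{m-1}$.

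For the upper bound I will use the Fourier-analytic reduction of \cite{dios-pont-et-al}. Write $E_m(A)=\int_{\T^d}|\wh{1_A}|^{2m}$. The required tensorisable inequality is
\[
\|\wh f\|_{L^{2m}(\T^d)}\le \|f\|_{\ell^p}, \qquad p=\frac{2m}{t},
\]
for all $f$ supported on $\{0,\dots,n-1\}^d$. Applied to $f=1_A$ this gives $E_m(A)\le |A|^t$. This is the one-dimensional Hausdorff--Young-type threshold \eqref{eq:HY-threshold}. The key point is that the best constant for functions on $\{0,\dots,n-1\}$ tensorises, by the standard Minkowski/induction argument for $\ell^p\to L^{q}$ with $q\ge p$. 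So $t_{m,n}$ is at most the least $t$ for which the one-dimensional inequality
\[
\int_0^1\Big|\sum_{j<n} f(j)e(j\theta)\Big|^{2m}\,d\theta\le \|f\|_{\ell^p}^{2m}
\]
holds with constant $1$.

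The task is therefore to prove this one-dimensional inequality with $t=2m-1-(1+o(1))\log_n C_m$, where $C_m=(2m-1)^{m-1/2}/(2m)^{m-1}$. Normalise $\|f\|_p=1$ with $f\ge0$. Note that $p$ is close to $2m/(2m-1)$, i.e. near $1$. I will split according to the shape of $f$.

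\emph{Concentrated $f$.} If $f$ is concentrated on $O(1)$ points, it is close to a delta function. A perturbative expansion around $f=\delta$ shows the inequality holds with room to spare, since the energy ratio is exactly $1$ there and $p>1$ favours concentration.

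\emph{Spread $f$.} If $f$ is spread over many points, I expect the extremal profile to be essentially smooth on scale $n$. Writing $f(j)=g(j/n)$, the quantity $\int|\wh f|^{2m}$ behaves like $n^{2m-1}\int_\R|\wh g|^{2m}$. The exponent then demands, to leading order,
\[
\int_\R|\wh g|^{2m}\le C_m^{-1}\|g\|_{L^{p_0}}^{2m}, \qquad p_0=\frac{2m}{2m-1}.
\]
This is the sharp Hausdorff--Young inequality (Babenko--Beckner) on $\R$ for the exponent pair $(p_0,2m)$. Its constant $A_{p_0}^{2m}$, with $A_p=(p^{1/p}/p'^{1/p'})^{1/2}$, evaluates to exactly $p_0^{m(2m-1)/(2m)\cdot}\cdots$, that is, to $C_m^{-1}$ after simplification. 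This matches the lower-bound constant, and Gaussians are the extremisers, which explains the ball construction.

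\emph{Intermediate scales.} Here $f$ is supported on $k$ effective points with $1\ll k\ll n$. I expect this regime to be the main obstacle, since it must be handled uniformly with $o(1)$ loss in the exponent and with discreteness at scale $k$. The plan is as follows:
\begin{itemize}
\item Discretise the Babenko--Beckner argument, e.g. via Brascamp--Lieb/Beckner applied to $f$ convolved with a smooth bump.
\item Alternatively, use a stability version of sharp Hausdorff--Young. It would show that near-extremisers are near-Gaussian at some scale $s$. Then treat $s\le n^{\epsilon}$ by the crude bound $\int|\wh f|^{2m}\le\|f\|_1^{2m-1}\|f\|_\infty$, and $s\ge n^{\epsilon}$ by the continuous approximation with Riemann-sum errors.
\end{itemize}
Because $p-p_0=O(1/\log n)$, the slack $n^{\cdot}$ factors only need to absorb losses of size $n^{o(1)}$, which makes this plausible.
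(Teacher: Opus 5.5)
\textbf{Upper bound: the one-dimensional inequality is never proved.} Your reduction to the one-dimensional inequality matches the paper (Proposition~\ref{prop:tensorization}), and your ``concentrated'' case corresponds to Lemma~\ref{lem:local}. But the step that glues the regimes together is missing, and you have misplaced the difficulty.

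Intermediate scales $1\ll k\ll n$ are harmless once you replace $f$ by $|f|$ and then by its rearrangement $f^*$ (Gabriel's inequality \eqref{eq:rearrangement}). A nonnegative balanced $v$ has $\|\Delta_1 v\|_{\ell^1}=2\|v\|_{\ell^\infty}$. So the exact identity of Proposition~\ref{prop:identity} gives $\|v^{*m}\|_{\ell^2}^2\le C_m^{-1}\|v\|_{\ell^p}^{2m}+\frac{4m}{3}\|v\|_{\ell^\infty}^{2p}\|v\|_{\ell^p}^{2m-2p}$ (Proposition~\ref{prop:comparison}), with no smoothness or scale assumption. Since $|\supp v|\le n$ gives $\|v\|_{\ell^q}^{2m}\ge e^{-\lambda}\|v\|_{\ell^p}^{2m}$, and $e^{-\lambda}=C_m^{-(1-\eps)}>C_m^{-1}$, this settles every $v$ with $\|v\|_{\ell^\infty}/\|v\|_{\ell^p}$ small, whatever its effective support.

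The hard functions are those with $\|v\|_{\ell^\infty}\gtrsim\|v\|_{\ell^p}$: mixtures of a few large values with a flat remainder, and concentrated profiles that are not near a delta. Your claim that concentration on $O(1)$ points means closeness to a delta is false; two equal point masses is a counterexample. For such profiles you need the strict inequality $\|f^{*m}\|_{\ell^2}<\|f\|_{\ell^p}^m$ in a form uniform enough to survive $q\to p$, while the gap degenerates at the delta itself.

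Your proposed tools do not supply this. A stability theorem for sharp Hausdorff--Young would have to be a discrete, scale-uniform version that you neither state nor prove. The crude bound $\|f\|_{\ell^1}^{2m-1}\|f\|_{\ell^\infty}$ is actually worse than the trivial bound at small scales. For $f(k)=e^{-\pi k^2/s^2}$ it is about $s^{2m-1}$, whereas $\|f\|_{\ell^p}^{2m}\approx p^{-(2m-1)/2}s^{2m-1}$. So it does not even recover Hausdorff--Young, let alone give the gain you need.

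\textbf{What closes the argument in the paper.} The missing idea is a compactness (profile) argument on balanced counterexamples.
\begin{enumerate}
\item Take normalized balanced counterexamples $f_j\to g$ pointwise, and set $\rho=\|g\|_{\ell^p}^p$.
\item Split $f_j$ into its part on $[-R,R]$ and a tail. The tail is monotone on each side, so Proposition~\ref{prop:comparison} applies to it. The cross terms vanish because $S_RM_R^{p-1}\to0$ (Lemma~\ref{lem:tail-product}).
\item This yields $\limsup_j\|f_j^{*m}\|_{\ell^2}^2\le\rho^s+C_m^{-1}(1-\rho)^s$ against $\liminf_j\|f_j\|_{\ell^{q_j}}^{2m}\ge(\rho+e^{-\lambda/s}(1-\rho))^s$ (Propositions~\ref{prop:energy-limit} and~\ref{prop:q-limit}). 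Superadditivity of $x\mapsto x^s$ then forces $\rho=1$.
\item Hence $f_j\to g$ in $\ell^p$, and equality $\|g^{*m}\|_{\ell^2}=\|g\|_{\ell^p}^m$ forces $g=\1_{\{0\}}$ (Lemma~\ref{lem:rigidity}). Lemma~\ref{lem:local} then gives the contradiction.
\end{enumerate}

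\textbf{Lower bound: a smaller gap of the same kind.} Your shell construction is only heuristic (``local limit theorem'', ``conditioning on the norms''). For general $m$ that computation is the entire content of the bound, and you do not carry it out.

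You do not need it. Tensorization is an identity, $t_{m,n}=2m/q_m(n)$ (Proposition~\ref{prop:tensorization}, including the converse direction you did not use). So it suffices to exhibit one function on $n$ consecutive integers violating \eqref{eq:HY-threshold}. The paper takes $f_n(k)=e^{-\pi k^2/L_n^2}$ with $L_n=n/(3\sqrt{\log n})$. Nonnegativity of the right-hand side in Proposition~\ref{prop:identity} gives $\|f_n^{*m}\|_{\ell^2}\ge\|V_n^{*m}\|_{L^2}$. Rescaling plus Gaussian extremality in Babenko--Beckner then gives $\liminf_n\|\wh{f_n}\|_{L^{2m}}/\|f_n\|_{\ell^q}\ge C_m^{\eps/(2m)}>1$.
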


The tensorization result of \cite{dios-pont-et-al} (see Proposition \ref{prop:tensorization} below) relates $t_{m,n}$ to the optimal exponent $q_m(n)$ in a one-dimensional Fourier inequality. For a finitely supported function $f:\Z\to\C$, define its Fourier transform $\wh{f}$ by
$$
 \wh f(\theta)=\sum_{x\in\Z}f(x)e(-x\theta),
 \qquad \theta\in\T=\R/\Z,
$$
where $e(y)=\exp(2\pi iy)$.

For $1\leq r<\infty$, we use the norms
$$
 \|f\|_{\ell^r(\Z)}=\left(\sum_{x\in\Z}|f(x)|^r\right)^{1/r},
 \qquad
 \|F\|_{L^r(X)}=\left(\int_X |F(x)|^r\,d\mu_X(x)\right)^{1/r},
$$
where $X\in\{\T,\R\}$ and $\mu_X$ is Lebesgue measure on $X$,
with $\T$ identified with $[0,1)$.
For $r=\infty$, the corresponding norms are
$$
 \|f\|_{\ell^\infty(\Z)}=\sup_{x\in\Z}|f(x)|,
 \qquad
 \|F\|_{L^\infty(X)}=\operatorname*{ess\,sup}_{x\in X}|F(x)|.
$$

Let $q_m(n)$ be the supremum of all $q\geq1$ for which
\begin{equation}\label{eq:HY-threshold}
 \|\wh f\|_{L^{2m}(\T)}\leq\|f\|_{\ell^q(\Z)}
\end{equation}
holds whenever $|\supp f|\leq n$. Set
\begin{equation}\label{eq:constants}
 p_m=\frac{2m}{2m-1},\qquad
 C_m=\frac{(2m-1)^{m-1/2}}{(2m)^{m-1}}.
\end{equation}
By the Hausdorff--Young inequality, \eqref{eq:HY-threshold} holds with $q = p_m$. Hence $q_m(n) \geq p_m$. The constant $C_m>1$ appears in the sharp Hausdorff--Young
inequality on $\R$. Babenko \cite{babenko} and Beckner
\cite{beckner} proved that, for every $F\in L^{p_m}(\R)$,
\begin{equation}\label{eq:euclidean-HY}
 \|\wh F\|_{L^{2m}(\R)}
 \leq C_m^{-1/(2m)}\|F\|_{L^{p_m}(\R)},
\end{equation}
with equality for Gaussian functions. Here the Fourier transform
on $\R$ is defined initially for $F\in L^1(\R)\cap L^{p_m}(\R)$ by
$$
 \wh F(\xi)=\int_\R F(x)e(-x\xi)\,dx,
 \qquad \xi\in\R,
$$
and extended by density to $L^{p_m}(\R)$.

\begin{theorem}\label{thm:main}
For every fixed integer $m\geq2$,
$$
 \lim_{n\rightarrow\infty} 2m\log n\Big(\frac1{p_m}-\frac1{q_m(n)}\Big) = \log C_m.
$$
More precisely, for every $0<\eps<1$, the following statements hold:
\begin{enumerate}[label=\textup{(\roman*)}]
\item For every $n$ sufficiently large in terms of $m,\eps$ and every function $f: \Z\rightarrow\C$ with $|\supp f| \leq n$, we have  $\|\wh{f}\|_{L^{2m}} \leq \|f\|_{\ell^q}$, where $q$ is defined by
$$
 \frac1q=\frac1{p_m}
 -\frac{(1-\eps)\log C_m}{2m\log n}.
$$
\item For every $n$ sufficiently large in terms of $m,\eps$, there exists a function $f:\Z\to[0,\infty)$ supported on exactly $n$ consecutive integers such that $\|\wh{f}\|_{L^{2m}} > \|f\|_{\ell^q}$, where $q$ is defined by 
$$
 \frac1q=\frac1{p_m}
 -\frac{(1+\eps)\log C_m}{2m\log n}.
$$
\end{enumerate}
\end{theorem}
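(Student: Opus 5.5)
For part (ii) I would test \eqref{eq:HY-threshold} on a truncated discrete Gaussian. Let $g(t)=e^{-\pi t^2}$ and $\sigma=n/\log n$. Put $f(x)=g(x/\sigma)$ for $x$ in a block of $n$ consecutive integers centred at $0$, and $f(x)=0$ otherwise. The truncation error is at most $e^{-c(\log n)^2}$, which is negligible.

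\emph{The $\ell^q$ side.} Poisson summation gives $\|f\|_{\ell^q}^q=\sigma\|g\|_{L^q(\R)}^q(1+o(1))$.

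\emph{The $L^{2m}$ side.} The function $\wh f$ is, up to negligible error, the periodization of $\sigma\wh g(\sigma\,\cdot)$. Since $\wh g$ has width $1/\sigma\ll1$, the periodized bumps do not overlap appreciably. Hence
$$\|\wh f\|_{L^{2m}(\T)}=(1+o(1))\,\sigma^{1-1/(2m)}\|\wh g\|_{L^{2m}(\R)}=(1+o(1))\,\sigma^{1/p_m}C_m^{-1/(2m)}\|g\|_{L^{p_m}(\R)},$$
where the last step uses equality in \eqref{eq:euclidean-HY} for Gaussians.

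\emph{Comparison.} The ratio $\|\wh f\|_{L^{2m}}/\|f\|_{\ell^q}$ is therefore
$$(1+o(1))\,\sigma^{1/p_m-1/q}\,C_m^{-1/(2m)}\,\frac{\|g\|_{p_m}}{\|g\|_q}.$$
With the prescribed $q$ and $\log\sigma=(1-o(1))\log n$, this equals
$$C_m^{((1+\eps)(1-o(1))-1)/(2m)}\,(1+o(1)),$$
because $\|g\|_{p_m}/\|g\|_q\to1$ as $q\to p_m$. For large $n$ this exceeds $1$, which proves (ii).

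For part (i), a single Hölder step $\|f\|_{\ell^{p_m}}\leq n^{1/p_m-1/q}\|f\|_{\ell^q}=C_m^{(1-\eps)/(2m)}\|f\|_{\ell^q}$ is only useful when $f$ is spread out. Concentrated $f$, such as a delta function, gives equality in discrete Hausdorff--Young, so there is no Babenko--Beckner gain for it. I would therefore prove two complementary estimates and balance them against each other.

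\emph{Spread-out case.} I would prove a transference lemma: if $f$ is ``spread at scale $L$'' (say no interval of length $L$ carries more than a small proportion of $\|f\|_{p_m}^{p_m}$), then
$$\|\wh f\|_{L^{2m}(\T)}\leq (C_m^{-1/(2m)}+\delta(L))\,\|f\|_{\ell^{p_m}},\qquad \delta(L)\to0 \text{ as } L\to\infty.$$
To prove it, I would write $\|\wh f\|_{2m}^{2m}=\|f^{*m}\|_{\ell^2}^2$. I would then compare with $F=\sum_x f(x)\1_{[x,x+1)}$ after rescaling, or use a smoothing kernel, and apply \eqref{eq:euclidean-HY} to $F$. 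The spread hypothesis should make the error from replacing $\T$ by $\R$ small, since $\wh F=\wh f\cdot\wh{\1_{[0,1)}}$.

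\emph{Concentrated case.} I would use the strict inequality $\|f\|_{\ell^{p_m}}<\|f\|_{\ell^q}$, which is quantitatively strong when $f$ is concentrated.

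\emph{Combining the cases.} I would decompose $f$ into pieces at different concentration scales, for instance by dyadic level sets or a Calderón--Zygmund-type decomposition into intervals, and then combine the pieces via the convolution structure of $E_m$.

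I expect this last interpolation to be the main obstacle. The exponent gap is only of order $1/\log n$, so the $\delta(L)$ losses and the cross terms between concentrated and spread pieces must all be controlled to within a factor $C_m^{\eps/(2m)}$. My first attempt would be to find a scale $L=n^{\eps/2}$ such that Hölder on sets of size $\leq L$ costs essentially nothing, while the spread part gains the full $C_m^{-(1-\eps)}$.
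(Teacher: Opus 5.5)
Part (ii) is fine. Your Poisson-summation computation for the Gaussian at scale $\sigma=n/\log n$ is a valid alternative to the paper's route, which uses the lower bound $\|f^{*m}\|_{\ell^2}\geq\|V^{*m}\|_{L^2}$ from Proposition \ref{prop:identity}, rescaling, and dominated convergence.

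Part (i) has a genuine gap, and it sits exactly where the difficulty is.

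\emph{The concentrated case.} Your tool here is false: since $q>p_m$, monotonicity gives $\|f\|_{\ell^q}\leq\|f\|_{\ell^{p_m}}$ for every $f$. More fundamentally, the inequality must hold with constant exactly $1$, and it is an equality at $f=\1_{\{0\}}$. So no scheme that loses a factor $1+o(1)$ can close near a point mass. For instance, Hölder on sets of size $L=n^{\eps/2}$ costs $L^{1/p_m-1/q}=C_m^{\eps(1-\eps)/(4m)}>1$. That loss is fatal for a near-delta piece, which has no Babenko--Beckner gain to absorb it.

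\emph{What is missing.} Two things are needed.
(a) An exact local inequality near the point mass. Write $f=f(0)\1_{\{0\}}+h$ and $t=\|h\|_{\ell^p}/|f(0)|$. Because the linear term is orthogonal to the delta, $\|f^{*m}\|_{\ell^2}\leq|f(0)|^m(1+O_m(t^2))$. On the other hand, Hölder on the at most $n$ points of $\supp h$ gives $\|f\|_{\ell^q}^m\geq|f(0)|^m(1+e^{-\lambda q/(2m)}t^q)^{m/q}$. Since $q<2$, $t^q\gg t^2$ (Lemma \ref{lem:local}).
(b) A mechanism forcing every near-counterexample to be close to a point mass. Your dyadic or Calderón--Zygmund decomposition does not supply this. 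The cross terms, and intermediate profiles such as a fixed non-delta finite function plus a spread tail, are never controlled.

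\emph{How the paper gets (b).} It argues by compactness. Reduce to nonnegative balanced counterexamples $f_j$ with $\|f_j\|_{\ell^p}=1$, and extract a pointwise limit $g$ with $\rho=\|g\|_{\ell^p}^p$. Then prove
$\limsup_j\|f_j^{*m}\|_{\ell^2}^2\leq\rho^s+C_m^{-1}(1-\rho)^s$ and $\liminf_j\|f_j\|_{\ell^{q_j}}^{2m}\geq(\rho+e^{-\lambda/s}(1-\rho))^s$.
Since $e^{-\lambda/s}>C_m^{-1/s}$ and $a^s+b^s\leq(a+b)^s$, this forces $\rho=1$. Then $f_j\to g$ in $\ell^p$, the rigidity Lemma \ref{lem:rigidity} gives $g=\1_{\{0\}}$, and (a) yields the contradiction.

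\emph{The spread case.} Your transference lemma for spread $f$ is true, but not by the route you sketch, and the fix is the other missing ingredient. Being spread does not make $f$ smooth. For $f=\1_{2\Z\cap[0,2N)}$, $\wh f$ has a full-size peak at $\theta=1/2$, where $|\wh{\1_{[0,1)}}|=2/\pi$. Also $\|\Delta_1 f^{*m}\|_{\ell^2}^2=2\|f^{*m}\|_{\ell^2}^2$, so the comparison with the step function $F$ in Proposition \ref{prop:identity} gives nothing. The paper first replaces $f$ by $|f|$, then by its balanced rearrangement using Gabriel's inequality \eqref{eq:rearrangement}. For a function $v$ that is monotone on each side, $\|\Delta_1v\|_{\ell^1}\leq4\|v\|_{\ell^\infty}$. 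This makes the discrepancy $O(\|v\|_{\ell^\infty}^{2p}\|v\|_{\ell^p}^{2m-2p})$ (Proposition \ref{prop:comparison}). The balanced structure is also what makes the cross terms between the compact part $f_j\1_{[-R,R]}$ and the tail vanish in the limit (Lemma \ref{lem:tail-product}). That is precisely the step your combination plan leaves open.
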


Corollary \ref{cor:energy} follows immediately from Theorem \ref{thm:main} and the identity $t_{m,n}=2m/q_m(n)$ established in Proposition \ref{prop:tensorization} below. 

\subsection{Other additive properties of subsets of discrete cubes}

We give a brief summary of other related results in the literature.
Sumsets of subsets of discrete cubes have also been studied
extensively. Woodall \cite{woodall} and, independently,
Hajela and Seymour \cite{hajela-seymour} proved the sharp bound
$$
 |A+B|\geq(|A||B|)^{\log_2 3/2}
 \qquad(A,B\subset\{0,1\}^d).
$$
For general discrete cubes, Bourgain, Dilworth, Ford, Konyagin
and Kutzarova \cite{bourgain-et-al} proved
$|A+B|\geq(|A||B|)^{\tau_n}$ for
$A,B\subset\{0,\ldots,n-1\}^d$, where $\tau_n>0$ is determined by
$$
 n^{-2\tau_n}+\Big(\frac{n-1}{n}\Big)^{\tau_n}=1,
 \qquad 2\tau_n-1\sim\frac{\log2}{\log n}
 \quad(n\to\infty).
$$
More recently, Becker, Ivanisvili, Krachun and Madrid
\cite{becker-et-al} proved the sharp bound
$|A+B|\geq(|A||B|)^{\log5/(2\log3)}$ for
$A,B\subset\{0,1,2\}^d$. The binary and ternary bounds are attained
when both sets are the full cube.

There are also related results for Gowers norms on discrete
cubes. Beker, Crmari\'c and Kova\v{c} \cite{beker-crmaric-kovac}
study the optimal bounds for $\|f\|_{U^k}$ in terms of
$\|f\|_{\ell^q}$ when $f$ is supported on
$\{0,1,\ldots,n-1\}^d$. They prove the sharp bound
$$
 \|\1_A\|_{U^k}^{2^k}\leq |A|^{\log_2(2k+2)}
 \qquad(A\subset\{0,1\}^d),
$$
as well as two-sided bounds for the optimal exponents as
$n\to\infty$ with $k$ fixed, and a precise asymptotic as
$k\to\infty$ with $n$ fixed. The case $k=2$ is the additive
energy problem considered here with $m=2$.

\subsection{Proof outline and structure of the paper}

The main idea behind the proof of Theorem \ref{thm:main}(i) is
to distinguish between two types of functions $f$. If $f$ is
reasonably smooth, then we expect its piecewise constant
interpolation $F$ on $\R$ to satisfy
$$
 \|\wh f\|_{L^{2m}(\T)}\approx\|\wh F\|_{L^{2m}(\R)},
 \qquad \|f\|_{\ell^{p_m}}=\|F\|_{L^{p_m}}.
$$
The Babenko--Beckner inequality \eqref{eq:euclidean-HY} for $F$ gives a factor of
$C_m^{-1/(2m)}$. On the other hand, since $f$ is supported on
at most $n$ integers, H\"older's inequality gives
$$
 \|f\|_{\ell^{p_m}}
 \leq n^{1/p_m-1/q}\|f\|_{\ell^q}.
$$
Thus using the $\ell^q$ norm instead of the $\ell^{p_m}$ norm
costs at most a factor of $n^{1/p_m-1/q}$. These two factors
exactly cancel when
$$
 \frac1q=\frac1{p_m}-\frac{\log C_m}{2m\log n}.
$$
Section
\ref{sec:comparison} gives a precise comparison of the norms involving $f$ and $F$, with an error controlled by a discrete smoothness measure of $f$.

At the other extreme, when $f$ is close to a point mass, we
do not expect this comparison with a function on $\R$ to be
useful. Instead, we expect $\|f\|_{\ell^{p_m}} \approx \|f\|_{\ell^q}$ in this regime.
Lemma \ref{lem:local} proves the required inequality for functions $f$ near a
point mass, following \cite[Section 4.3]{shao-cubes}.

The rest of the paper is organized as follows.
Section \ref{sec:norms} records the norm inequalities used in the
proof, including an estimate for functions close to a point mass (Lemma \ref{lem:local}).
The comparison between discrete and continuous convolution is proved
in Section \ref{sec:comparison}. Section \ref{sec:compactness}
establishes the two limiting estimates used in the compactness argument in Section \ref{sec:main-proof}.
We complete the proof of Theorem \ref{thm:main} and Corollary
\ref{cor:energy} in Section \ref{sec:main-proof}.

\subsection*{AI assistance}

ChatGPT (OpenAI) was used by the authors to assist with developing mathematical arguments and preparing the manuscript. The authors take full responsibility for the results and their presentation.

\subsection*{Notation}
Throughout the paper, $m\geq2$ is fixed, and the constants
$p_m,C_m$ are given in \eqref{eq:constants}. We write $p=p_m$
and $s=2m-1=2m/p$. All logarithms without a specified base are natural. For a function $f:\Z\rightarrow\C$ and $h \in \Z$, define $\Delta_hf: \Z\rightarrow\C$ by the formula $\Delta_hf(x) = f(x+h) - f(x)$. 

For a finitely
supported nonnegative function $f$ on $\Z$, let $f^*$ be
obtained by placing the values of $f$ in nonincreasing order at
$$
 0,1,-1,2,-2,\ldots.
$$
In particular, we have
$$
f^*(0)\geq f^*(1)\geq f^*(-1)\geq f^*(2)\geq f^*(-2)\geq\cdots.
$$
We say that $f$ is \textbf{balanced} if $f = f^*$. For a nonnegative function of possibly infinite support, balanced means that its values satisfy the same ordering along $0,1,-1,2,-2,\ldots$.

\section{Hausdorff--Young and interpolation inequalities}\label{sec:norms}

In this section, we collect the norm inequalities used in the proof. As mentioned at the end of the introduction,
$m \geq 2$ and $p=2m/(2m-1)$.

\subsection{Hausdorff--Young and Young inequalities}

For $1\leq r\leq2$, write $r'$ for the conjugate exponent,
so that $1/r+1/r'=1$. The Hausdorff--Young inequality on $\Z$
states that
\begin{equation}\label{eq:HY-general}
 \|\wh f\|_{L^{r'}(\T)}\leq\|f\|_{\ell^r(\Z)}.
\end{equation}
This follows, for example,  by Riesz--Thorin interpolation between the trivial
$\ell^1\to L^\infty$ bound and Plancherel's theorem. On $\R$, we have the sharp form 
$$
 \|\wh F\|_{L^{r'}(\R)}
 \leq\frac{r^{1/(2r)}}{(r')^{1/(2r')}}\|F\|_{L^r(\R)}
 \qquad(1<r\leq2),
$$
proved by Babenko \cite{babenko} when $r'$ is an even integer and by Beckner \cite{beckner} for all $1<r\leq2$.

For functions $f,g:\Z\to\C$, define
$$
 (f*g)(x)=\sum_{y\in\Z}f(y)g(x-y),\qquad
 f^{*r}=\underbrace{f*\cdots*f}_{r\text{ times}}.
$$
We use the same notation for convolution on $\R$, with the sum
replaced by an integral. For example, for $F,G\in L^1(\R)$,
$$
 (F*G)(x)=\int_\R F(y)G(x-y)\,dy
 \qquad\text{for a.e. }x\in\R.
$$
Young's convolution inequality is
\begin{equation}\label{eq:young-general}
 \|f_1*\cdots*f_r\|_{\ell^u}
 \leq\prod_{j=1}^r\|f_j\|_{\ell^{p_j}},
 \qquad \frac1u=\sum_{j=1}^r\frac1{p_j}-(r-1),
\end{equation}
where $1\leq p_j,u\leq\infty$. The same inequality holds on
$\R$ with $L^u$ and $L^{p_j}$ norms. In particular,
$$
 \|f^{*m}\|_{\ell^2}\leq\|f\|_{\ell^p}^m.
$$
By Plancherel's theorem,
$$
 \|\wh f\|_{L^{2m}(\T)}^{2m}=\|f^{*m}\|_{\ell^2}^2.
$$
On $\R$, \eqref{eq:euclidean-HY} gives
\begin{equation}\label{eq:young-euclidean}
 \|F^{*m}\|_{L^2}^2\leq C_m^{-1}\|F\|_{L^p}^{2m}.
\end{equation}
This is also a special case of the sharp convolution inequality of
Beckner \cite{beckner} and Brascamp--Lieb \cite{brascamp-lieb}.

\subsection{Interpolation}

We shall repeatedly use the following consequence of H\"older's
inequality. On any measure space $X$ (such as $X = \Z$ with the counting measure or $X = \R$ with the Lebesgue measure), if
$1\leq a\leq r\leq b\leq\infty$ and
$1/r=\theta/a+(1-\theta)/b$ with $0\leq\theta\leq1$, then
\begin{equation}\label{eq:interpolation}
 \|f\|_{L^r(X)}
 \leq\|f\|_{L^a(X)}^\theta\|f\|_{L^b(X)}^{1-\theta}.
\end{equation}
In particular, taking $X=\Z$ with counting measure and one
exponent to be $\infty$ in \eqref{eq:interpolation} gives
\begin{equation}\label{eq:interpolation-infinity}
 \|f\|_{\ell^b}
 \leq\|f\|_{\ell^\infty}^{1-a/b}\|f\|_{\ell^a}^{a/b}
 \qquad(1\leq a\leq b\leq \infty).
\end{equation}
Since $\|f\|_{\ell^\infty}\leq\|f\|_{\ell^a}$, we have monotonicity
\begin{equation}\label{eq:interpolation-monotone}
\|f\|_{\ell^b}\leq\|f\|_{\ell^a}
\qquad(1\leq a\leq b\leq\infty).
\end{equation}
If $|\supp f|\leq n$, then H\"older's inequality gives the reverse bound
\begin{equation}\label{eq:support-norms}
 \|f\|_{\ell^a}\leq n^{1/a-1/b}\|f\|_{\ell^b}
 \qquad(1\leq a\leq b\leq\infty).
\end{equation}

\subsection{Functions close to a point mass}

Charalambides and Christ \cite[Theorem 1.3]{charalambides-christ}
proved that near equality in \eqref{eq:HY-general}, for any
fixed $1<r<2$, forces concentration at one point. For the
convolution inequality used here, this conclusion also
follows from the inverse Young inequality of Fournier
\cite{fournier}, a proof of which is given by Eisner and Tao
\cite[Proposition 5.4]{eisner-tao} (see also \cite[Proposition 4.5]{shao-cubes}).
By Plancherel's theorem, the following exact-equality case is an
immediate consequence of \cite[Theorem 1.3]{charalambides-christ}.

\begin{lemma}\label{lem:rigidity}
Let $f\in\ell^p(\Z)$. If
$$
 \|f^{*m}\|_{\ell^2}=\|f\|_{\ell^p}^m,
$$
then $f$ is supported on at most one point.
\end{lemma}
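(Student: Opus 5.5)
Equality in $\|f^{*m}\|_{\ell^2}\leq\|f\|_{\ell^p}^m$ means, by Plancherel, equality in the Hausdorff--Young inequality \eqref{eq:HY-general} with $r=p$, and Charalambides--Christ say such extremizers are point masses. I will run that reduction directly, and also give a self-contained argument that avoids quoting the full theorem. The identity $\|\wh f\|_{L^{2m}(\T)}^{2m}=\|f^{*m}\|_{\ell^2}^2$ turns the hypothesis into
$$\|\wh f\|_{L^{2m}(\T)}=\|f\|_{\ell^{p}(\Z)},\qquad \frac1p+\frac1{2m}=1,$$
so $f$ is an extremizer of \eqref{eq:HY-general} with exponent $r=p\in(1,2)$. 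Since $m\geq2$ we have $p<2$, and \cite[Theorem 1.3]{charalambides-christ} applies. Its exact-equality case (the $\delta=0$ instance of its near-extremizer statement) says that $f$ is a multiple of a character times a point mass, i.e.\ $f=c\,\1_{\{x_0\}}$. That is the whole proof if citation is allowed. For $f\in\ell^p$ with infinite support, the identity $\|\wh f\|_{2m}^{2m}=\|f^{*m}\|_2^2$ still holds by density, since both sides are continuous in $\ell^p$ by Young and Hausdorff--Young. This density step needs checking.

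For the self-contained argument, I first reduce to $f\geq0$. Since $|f^{*m}|\leq |f|^{*m}$ pointwise, equality for $f$ forces equality for $|f|$ and $\|f^{*m}\|_2=\||f|^{*m}\|_2$, so it suffices to show $\supp |f|$ is a single point. With $g=|f|\geq0$, I factor Young's inequality as a chain:
$$\|g^{*m}\|_{\ell^2}\leq \|g\|_{\ell^{u}}\|g^{*(m-1)}\|_{\ell^{v}}\leq \cdots$$
with exponents satisfying Young's relation. Equality in the whole chain forces equality in a two-function Young inequality $\|g*h\|_{\ell^w}\le\|g\|_{\ell^a}\|h\|_{\ell^b}$ with $h=g^{*(m-1)}$ and all exponents strictly between $1$ and $\infty$ (here $a=p$).

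The key step, and the main obstacle, is to show that on $\Z$ equality in such a nonendpoint Young inequality for nonnegative nonzero functions forces both functions to be point masses. The standard proof of Young goes via Hölder applied to the factorization
$$g(y)h(x-y)=\big(g^{a/w}h^{b/w}\big)\,g^{1-a/w}\,h^{1-b/w}.$$
Equality in Hölder then forces, for every $x$ with $(g*h)(x)>0$, that $y\mapsto g(y)^{a}$ and $y\mapsto h(x-y)^{b}$ be proportional on the relevant supports. A cleaner route uses the trivial bound $\|g\|_{\ell^\infty}\leq\|g\|_{\ell^a}$, which is strict unless $g$ is a point mass. Interpolating Young between the endpoint $(a,1)\to a$ and the endpoint where $\ell^{a'}\to\ell^\infty$, the $\ell^\infty$ endpoint is lossy whenever $g$ has two points in its support. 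Making that strictness quantitative, and ensuring it survives the interpolation, is where care is needed.

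If that becomes messy, I will simply rely on the cited theorem, as in the first paragraph.
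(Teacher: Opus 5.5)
Your main argument (Plancherel turns the hypothesis into equality in the Hausdorff--Young inequality \eqref{eq:HY-general} at $r=p\in(1,2)$, and the equality case of \cite[Theorem 1.3]{charalambides-christ} then forces a point mass) is exactly the paper's proof. Your density remark correctly justifies $\|\wh f\|_{L^{2m}}^{2m}=\|f^{*m}\|_{\ell^2}^2$ for infinitely supported $f\in\ell^p$, a point the paper leaves implicit; the self-contained Young-equality sketch is unnecessary, and, as you acknowledge, incomplete, though it is close in spirit to the alternative via Fournier's inverse Young inequality that the paper mentions.
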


The preceding result shows that the Hausdorff--Young inequality
at exponent $p$ is strict unless $f$ is supported on at most
one point. For functions close to a point mass, the next lemma gives a stronger inequality with an exponent $q>p$.

\begin{lemma}\label{lem:local}
Fix $m\geq2$ and $\lambda>0$. Let $n$ be sufficiently large in terms of $m,\lambda$, and define $q$ by
$$
 \frac1q=\frac1p-\frac\lambda{2m\log n}.
$$
Let $f: \Z\rightarrow \C$ be a function with $|\supp f| \leq n$, and write $f=f(0)\1_{\{0\}}+h$. Suppose that
$$
 \|h\|_{\ell^p}\leq\delta |f(0)|
$$
for some $\delta > 0$ sufficiently small in terms of $m,\lambda$. Then
$$
 \|f^{*m}\|_{\ell^2}\leq\|f\|_{\ell^q}^m.
$$
\end{lemma}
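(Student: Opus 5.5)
Normalize first. Both sides are homogeneous of degree $m$ in $f$ and invariant under multiplying $f$ by a unimodular constant. So we may assume $f(0)=1$, hence $f=\1_{\{0\}}+h$ with $h(0)=0$ and $\|h\|_{\ell^p}\leq\delta$. We prove the squared inequality $\|f^{*m}\|_{\ell^2}^2\leq\|f\|_{\ell^q}^{2m}$ by expanding both sides to first nontrivial order in $h$. Note that $2m/q=s-\lambda/\log n$, where $s=2m-1$.

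\textbf{Step 1: the left side.} Expand $f^{*m}=\sum_{k=0}^m\binom mk h^{*k}$ with $h^{*0}=\1_{\{0\}}$. Then $\|f^{*m}\|_{\ell^2}^2$ is a sum of inner products $\langle h^{*j},h^{*k}\rangle$ with binomial weights.
\begin{itemize}
\item The $(0,0)$ term equals $1$.
\item The terms of total degree $j+k=1$ are multiples of $h(0)=0$ and vanish.
\item The terms of degree $2$ are $m^2\|h\|_{\ell^2}^2$ and $2\binom m2\operatorname{Re}(h*h)(0)$. Since $|(h*h)(0)|\leq\|h\|_{\ell^2}^2$, together they are bounded by $2m^2\|h\|_{\ell^2}^2$.
\item A term of degree $d=j+k\geq3$ (so $d\leq 2m$) can be written as $(h^{*j}*\tilde h^{*k})(0)$, where $\tilde h(x)=\overline{h(-x)}$. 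Young's inequality \eqref{eq:young-general} with $u=\infty$ bounds it by $\|h\|_{\ell^{r_d}}^d$, where $r_d=d/(d-1)\in[p,3/2]$.
\end{itemize}

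\textbf{Step 2: convert all norms to $\ell^q$.} For $r\geq q$, monotonicity \eqref{eq:interpolation-monotone} gives $\|h\|_{\ell^r}\leq\|h\|_{\ell^q}$. For $p\leq r<q$, the support bound \eqref{eq:support-norms} gives
$$\|h\|_{\ell^r}\leq\|h\|_{\ell^p}\leq n^{1/p-1/q}\|h\|_{\ell^q}=e^{\lambda/(2m)}\|h\|_{\ell^q}.$$
This is the key point: the choice of $q$ makes the passage from $\ell^p$ to $\ell^q$ cost only a constant depending on $\lambda$. Hence each degree-$d\geq3$ term is at most $C_{m,\lambda}\|h\|_{\ell^q}^{d}$. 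Since $d-q\geq 3-2>0$ and $\|h\|_{\ell^q}\leq\|h\|_{\ell^p}\leq\delta$, this is at most $C_{m,\lambda}\delta^{d-q}\|h\|_{\ell^q}^q$.

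For the degree-$2$ terms, use \eqref{eq:interpolation-infinity} together with $\|h\|_{\ell^\infty}\leq\delta$:
$$\|h\|_{\ell^2}^2\leq\|h\|_{\ell^\infty}^{2-q}\|h\|_{\ell^q}^q\leq\delta^{2-q}\|h\|_{\ell^q}^q.$$
Here $2-q\geq 2-3/2>0$ once $n$ is large. Combining,
$$\|f^{*m}\|_{\ell^2}^2\leq 1+\eta(\delta)\,\|h\|_{\ell^q}^q,$$
where $\eta(\delta)\to0$ as $\delta\to0$, uniformly in $n$ large.

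\textbf{Step 3: the right side.} Write $x=\|h\|_{\ell^q}^q\leq\delta^q$, which is small. Then $\|f\|_{\ell^q}^{2m}=(1+x)^{s-\lambda/\log n}$, and the exponent is at least $2$ once $n$ is large. Hence $(1+x)^{s-\lambda/\log n}\geq 1+2x$. Choosing $\delta$ so small that $\eta(\delta)\leq 2$ gives the claim.

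The main obstacle is the top-degree terms, with $d=2m$ and $r_d=p<q$. There, comparing $\ell^p$ with $\ell^q$ is only possible through the support condition $|\supp f|\leq n$. That comparison costs exactly the bounded factor $e^{\lambda/(2m)}$, which the smallness $\delta^{d-q}$ absorbs. Everything else is routine bookkeeping of constants depending on $m$ and $\lambda$.
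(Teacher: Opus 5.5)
Your proof is correct and follows essentially the same strategy as the paper. The linear term vanishes because $h(0)=0$, so the excess on the left is quadratic (or higher) in $h$, while the gain on the right is of order $\|h\|^q$ with $q<2$. The support condition converts between $\ell^p$ and $\ell^q$ at the bounded cost $e^{\lambda/(2m)}$.

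The difference is only bookkeeping. The paper applies the triangle inequality to the unsquared norm, measures everything in $\ell^p$, and applies the support bound to the right-hand side. You expand the square, measure in $\ell^q$, and apply the support bound to the top-degree terms on the left. Before normalizing to $f(0)=1$ you should note the trivial case $f(0)=0$, where the hypothesis forces $f=0$.
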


\begin{proof}
The argument follows \cite[Section 4.3]{shao-cubes}.
If $f(0)=0$, then $h=0$ and the conclusion is trivial. We may therefore
assume that $f(0)\neq 0$. Set $x=|f(0)|$, $y=\|h\|_{\ell^p}$, and $t=y/x\leq\delta$. From the decomposition
$f=f(0)\1_{\{0\}}+h$ we obtain
$$
 f^{*m} = f(0)^m\1_{\{0\}}+mf(0)^{m-1}h
       +\sum_{r=2}^m\binom mr f(0)^{m-r}h^{*r}.
$$
The first two terms are orthogonal in $\ell^2$ because $h(0)=0$.
Since $\|h\|_{\ell^2}\leq\|h\|_{\ell^p}=y$, we have
$$
 \|f(0)^m\1_{\{0\}}+mf(0)^{m-1}h\|_{\ell^2}^2
 =x^{2m}+m^2x^{2m-2}\|h\|_{\ell^2}^2
 \leq x^{2m}(1+m^2t^2).
$$
For $1\leq r\leq m$, set $p_r=2r/(2r-1)\geq p$.
By Young's inequality \eqref{eq:young-general} and monotonicity \eqref{eq:interpolation-monotone}, we have
$$
 \|h^{*r}\|_{\ell^2}
 \leq\|h\|_{\ell^{p_r}}^r
 \leq\|h\|_{\ell^p}^r=y^r.
$$
Combining these estimates with the triangle inequality, we obtain
$$
\begin{aligned}
 \|f^{*m}\|_{\ell^2}
 &\leq  \|f(0)^m\1_{\{0\}}+mf(0)^{m-1}h\|_{\ell^2} + \sum_{r=2}^m\binom{m}{r} x^{m-r}\|h^{*r}\|_{\ell^2} \\
 &\leq x^m\Big(\sqrt{1+m^2t^2}
                      +\sum_{r=2}^m\binom mr t^r\Big).
\end{aligned}
$$
Since $\sqrt{1+m^2t^2}\leq1+m^2t^2/2$ and
$t^r\leq t^2$ for $r\geq2$ and $0\leq t\leq1$, we have
\begin{equation} \label{eq:local-upper}
 \|f^{*m}\|_{\ell^2} \leq x^m\Big(1 + \Big(\frac{m^2}{2} + 2^m\Big) t^2\Big).
\end{equation}
On the other hand, by H\"{o}lder's inequality (see \eqref{eq:support-norms}) we have
$$
 \|h\|_{\ell^q}\geq n^{1/q-1/p}\|h\|_{\ell^p}=e^{-\lambda/(2m)}y.
$$
Consequently,
\begin{equation}\label{eq:local-lower}
 \|f\|_{\ell^q}^m
 =\Big(x^q+\|h\|_{\ell^q}^q\Big)^{m/q} \geq x^m\Big(1+e^{-\lambda q/(2m)}t^q\Big)^{m/q}.
\end{equation}
We now compare the right-hand sides of \eqref{eq:local-upper} and \eqref{eq:local-lower}. Since $\delta$ is sufficiently small in terms of $m,\lambda$, we can ensure that
$$
 \Big(\frac{m^2}{2}+2^m\Big)\delta^{2-q}
 \leq e^{-\lambda q/(2m)}.
$$
Since $t \leq \delta$ and, by the definition of $q$, $m/q\geq1$
for sufficiently large $n$, it follows that
$$
1 + \Big(\frac{m^2}{2} + 2^m\Big) t^2 \leq 1 +  e^{-\lambda q/(2m)} t^q \leq \Big(1+e^{-\lambda q/(2m)}t^q\Big)^{m/q}.
$$
This completes the proof.
\end{proof}

\section{Comparison with convolution on \texorpdfstring{$\R$}{R}}\label{sec:comparison}

Given a finitely supported $v:\Z\to\C$, define its piecewise
constant interpolation $V:\R\to\C$ by
\begin{equation}\label{eq:step}
 V(x)=\sum_{k\in\Z}v(k)\chi(x-k),
\end{equation}
where $\chi = \1_{[0,1)}$.
Clearly $\|V\|_{L^r}=\|v\|_{\ell^r}$ for $1\leq r\leq\infty$. The purpose of this section is to compare norms of $V^{*m}$ and $v^{*m}$.
Define
$$
 \kappa_m(h)=\int_\R \chi^{*m}(x)\chi^{*m}(x-h)\,dx
 \quad(h\in\Z).
$$
These weights are nonnegative, symmetric in $h$, and supported
on $|h|<m$. We first compute their first and second moments.

\begin{lemma}\label{lem:kappa}
For $m\geq2$ we have
$$
 \sum_{h\in\Z}\kappa_m(h)=1,
 \qquad \sum_{h\in\Z}h^2\kappa_m(h)=\frac m{6}.
$$
\end{lemma}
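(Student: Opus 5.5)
Set $B=\chi^{*m}$. Then $B$ is the density of $U_1+\cdots+U_m$, where the $U_j$ are independent uniform random variables on $[0,1)$. We have $B\geq0$ and $\int_\R B=1$, and $B$ is supported in $[0,m]$. Hence $\kappa_m(h)=\int_\R B(x)B(x-h)\,dx$ is the density at $h$ of $D=X-Y$, where $X,Y$ are i.i.d.\ with density $B$. Equivalently, $D=\sum_{j=1}^m (U_j-U_j')$ is a sum of $m$ i.i.d.\ copies of $W=U-U'$, and $W$ has the triangular density $T(w)=(1-|w|)_+$. So $\kappa_m(h)=T^{*m}(h)$, where $T=\chi*\widetilde\chi$ and $\widetilde\chi(x)=\chi(-x)$. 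Both moment identities then become statements about the density $T^{*m}$ sampled at the integers.

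The plan is to use Poisson summation. The Fourier transform of $T^{*m}$ is
$$
\wh{T^{*m}}(\xi)=\Big(\frac{\sin\pi\xi}{\pi\xi}\Big)^{2m}.
$$
Poisson summation then gives
$$
\sum_{h\in\Z}T^{*m}(h)\,e(-h\xi)=\sum_{k\in\Z}\Big(\frac{\sin\pi(\xi+k)}{\pi(\xi+k)}\Big)^{2m}.
$$
The left side is a finite sum, since the support is $|h|<m$. The right side converges absolutely together with its derivatives, because $2m\geq4$, so applying Poisson summation here is justified. Call this trigonometric polynomial $P(\xi)$. Then $\sum_h\kappa_m(h)=P(0)$ and $\sum_h h^2\kappa_m(h)=-P''(0)/(4\pi^2)$.

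At $\xi=0$ the only nonzero term on the right is $k=0$, which equals $1$, so $P(0)=1$. For the second moment we split off $k=0$. The term $k=0$ is $(\operatorname{sinc}\xi)^{2m}$. By the series $\sin(\pi\xi)/(\pi\xi)=1-\pi^2\xi^2/6+O(\xi^4)$, its second derivative at $0$ is $-2m\pi^2/3$. Every term with $k\neq0$ has a zero of order $2m\geq4$ at $\xi=0$, so its second derivative there vanishes. The differentiation of the sum term by term is justified by uniform convergence, since the derivatives decay like $|k|^{-2m}$. Therefore
$$
\sum_{h}h^2\kappa_m(h)=\frac{2m\pi^2/3}{4\pi^2}=\frac m6.
$$

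As a sanity check, note that the continuous variance is $\operatorname{Var}(D)=m\operatorname{Var}(W)=m/6$. The lemma therefore says that the integer samples of $T^{*m}$ reproduce the total mass and the variance exactly, and the Poisson summation argument is what explains this. The only delicate point is that we need $m\geq2$: it guarantees that the aliasing terms $k\neq0$ vanish to order at least $3$ at $\xi=0$, so they contribute nothing to the second derivative. For $m=1$ they would contribute: the sum of $h^2(1-|h|)_+$ over the integers is $0$, not $1/6$.
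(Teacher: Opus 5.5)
Your proof is correct, and it takes a genuinely different route from the paper.

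The paper stays in physical space. It writes $\kappa_m(h)=(K_{m-1}*K_1)(h)$ with $K_1(x)=\max(1-|x|,0)$ and interchanges the sum over $h$ with the integral. It then uses two facts about integer translates of the hat function $K_1$: they form a partition of unity, and they linearly interpolate $h^2$, i.e.\ $\sum_h h^2K_1(h-x)=x^2+\{x\}(1-\{x\})$. The continuous variance $\int x^2K_{m-1}=(m-1)/6$ comes from an induction. The correction term is evaluated as $\int_0^1u(1-u)\,du=1/6$, using $\sum_j K_{m-1}(j+u)=1$. In that argument $m\geq2$ is needed so that $K_{m-1}$ is a genuine function whose integer translates sum to $1$.

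You instead periodize on the Fourier side, writing $\sum_h\kappa_m(h)e(-h\xi)=\sum_k\bigl(\sin\pi(\xi+k)/(\pi(\xi+k))\bigr)^{2m}$, and read off the value and the second derivative at $\xi=0$. In your argument $m\geq2$ enters through the order of vanishing of the aliasing terms.

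Your argument is shorter and more conceptual, and it gives more: since the terms with $k\neq0$ vanish to order $2m$ at $\xi=0$, every moment $\sum_h h^j\kappa_m(h)$ with $j\leq 2m-1$ equals the continuous moment $\int x^jK_m(x)\,dx$. The paper's argument is purely elementary. It avoids the analytic bookkeeping your route needs, namely Fourier inversion to identify the periodization with the trigonometric polynomial and uniform convergence for term-by-term differentiation. You handle both adequately; absolute convergence already holds for $2m\geq2$, so your appeal to $2m\geq4$ there is more than needed.
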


\begin{proof}
For each integer $r\geq1$, set
$$
 K_r=\chi^{*r}*\wt\chi^{*r},\qquad
 \wt\chi(x)=\chi(-x).
$$
Then $\kappa_m(h)=K_m(h)$ and $K_r=K_1^{*r}$, where
$$
 K_1(x)=\max(1-|x|,0).
$$
Since $K_1$ is even and has integral $1$, the same is true for each $K_r$.
Write $\{x\} = x-\lfloor x\rfloor$. For any $x$, only $h=\lfloor x\rfloor$
and $h=\lfloor x\rfloor+1$ can contribute to the following sums, so
\begin{align*}
 & \sum_{h\in\Z}K_1(h-x)=(1-\{x\}) + \{x\} = 1,\\
 & \sum_{h\in\Z}h^2K_1(h-x)
 =(1-\{x\})\lfloor x\rfloor^2
     +\{x\}(\lfloor x\rfloor+1)^2=x^2+\{x\}(1-\{x\}).
\end{align*}
For $m \geq 2$ we have
$$
 \sum_{h\in\Z}K_m(h-x)
 =\int_{\R}K_{m-1}(y)
       \Big(\sum_{h\in\Z}K_1(h-x-y)\Big)\,dy=\int_{\R}K_{m-1}(y)\,dy=1.
$$
Setting $x=0$ proves the first identity in the lemma.

For the second moment, first note that $\int_\R x^2K_1(x)\,dx=1/6$ by direct computation. For $r\geq2$ we have
$$
\begin{aligned}
  \int_\R x^2K_r(x)\,dx
 &=\int_{\R^2}(y+z)^2K_{r-1}(y)K_1(z)\,dy\,dz \\
 &= \Big(\int_{\R}y^2K_{r-1}(y)\,dy\Big)
       \Big(\int_{\R}K_1(z)\,dz\Big)+\Big(\int_{\R}K_{r-1}(y)\,dy\Big)
       \Big(\int_{\R}z^2K_1(z)\,dz\Big)\\
 &\qquad + 2\Big(\int_{\R}yK_{r-1}(y)\,dy\Big)\Big(\int_{\R}zK_1(z)\,dz\Big) \\
 &=\int_\R y^2K_{r-1}(y)\,dy+\frac16,
 \end{aligned}
$$
where the mixed term vanishes since each $K_r$ is even.
It follows by induction that $\int_\R x^2K_r(x)\,dx=r/6$
for each $r$.

For $m\geq2$, we therefore obtain
\begin{align*}
 \sum_{h\in\Z}h^2\kappa_m(h)
 &=\int_\R K_{m-1}(x)
                   \sum_{h\in\Z}h^2K_1(h-x)\,dx\\
 &=\int_\R K_{m-1}(x) \Big(x^2 + \{x\}(1-\{x\})\Big)\,dx \displaybreak[1]\\
 &=\frac{m-1}{6}
       +\int_\R \{x\}(1-\{x\})K_{m-1}(x)\,dx\\
 &=\frac{m-1}{6}
       +\int_0^1u(1-u)\sum_{j\in\Z}K_{m-1}(j+u)\,du\\
 &=\frac{m-1}{6}+\int_0^1u(1-u)\,du=\frac m6.
\end{align*}
This completes the proof.
\end{proof}

Recall that for a function $f:\Z\rightarrow\C$ and $h \in \Z$, $\Delta_hf$ denotes the function defined by $\Delta_hf(x) = f(x+h)-f(x)$. The following proposition captures the idea that $v^{*m}$ is close to $V^{*m}$ when $v$ is reasonably ``smooth'' in the sense that $\Delta_1v^{*m}$ has small $\ell^2$ norm.

\begin{proposition}\label{prop:identity}
Let $v:\Z\to\C$ be finitely supported and let $V$ be defined by
\eqref{eq:step}.
Then
$$
 \|v^{*m}\|_{\ell^2}^2-\|V^{*m}\|_{L^2}^2
 =\frac12\sum_{h\in\Z}\kappa_m(h)
                    \|\Delta_h v^{*m}\|_{\ell^2}^2.
$$
In particular,
$$
 0\leq\|v^{*m}\|_{\ell^2}^2-\|V^{*m}\|_{L^2}^2
 \leq\frac m{12}\|\Delta_1 v^{*m}\|_{\ell^2}^2.
$$
\end{proposition}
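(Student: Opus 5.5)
The plan is to reduce everything to the single discrete function $w=v^{*m}$ and the weights $\kappa_m$. The first step is to observe that the interpolation \eqref{eq:step} is a semi-discrete convolution, $V=\sum_k v(k)\chi(\cdot-k)$. Convolving $m$ copies and collecting the shifts, we get
$$
 V^{*m}(x)=\sum_{k_1,\dots,k_m}v(k_1)\cdots v(k_m)\,\chi^{*m}(x-k_1-\cdots-k_m)=\sum_{k\in\Z}w(k)\chi^{*m}(x-k).
$$
All sums here are finite, since $v$ is finitely supported, so there are no convergence issues.

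Next I would expand the $L^2$ norm:
$$
 \|V^{*m}\|_{L^2}^2=\sum_{k,l\in\Z}w(k)\overline{w(l)}\int_\R\chi^{*m}(x-k)\chi^{*m}(x-l)\,dx=\sum_{k,l}w(k)\overline{w(l)}\kappa_m(k-l),
$$
after the substitution $x\mapsto x+l$. Writing $l=k+h$, this equals $\sum_h\kappa_m(h)\langle w,w(\cdot+h)\rangle$. On the other side, for each $h$,
$$
 \|\Delta_hw\|_{\ell^2}^2=2\|w\|_{\ell^2}^2-2\,\mathrm{Re}\,\langle w(\cdot+h),w\rangle .
$$
Multiplying by $\tfrac12\kappa_m(h)$ and summing over $h$, I would use $\sum_h\kappa_m(h)=1$ from Lemma \ref{lem:kappa}. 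The remaining term is $\sum_h\kappa_m(h)\langle w(\cdot+h),w\rangle$, and it is real: because $\kappa_m$ is real and even, the terms for $h$ and $-h$ are complex conjugates of each other. Hence $\mathrm{Re}$ may be dropped, and the sum equals $\|V^{*m}\|_{L^2}^2$. This gives the identity.

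For the "in particular", nonnegativity is immediate because $\kappa_m\ge0$. For the upper bound, I would telescope
$$
 \Delta_hw=\sum_{j=0}^{h-1}(\Delta_1w)(\cdot+j)\qquad(h>0),
$$
and similarly for $h<0$, $h=0$ being trivial. The triangle inequality then gives $\|\Delta_hw\|_{\ell^2}\le|h|\,\|\Delta_1w\|_{\ell^2}$. Inserting this and using the second moment $\sum_hh^2\kappa_m(h)=m/6$ from Lemma \ref{lem:kappa} yields the factor $\tfrac12\cdot\tfrac m6=\tfrac m{12}$.

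There is no genuine obstacle here. The only points needing care are justifying $V^{*m}=\sum_k w(k)\chi^{*m}(\cdot-k)$ (which is just associativity of convolution for finitely many compactly supported terms) and the reality of the cross term, which uses the symmetry of $\kappa_m$.
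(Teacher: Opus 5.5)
Your proposal is correct and follows essentially the same route as the paper: expand $V^{*m}=\sum_k v^{*m}(k)\chi^{*m}(\cdot-k)$, write $\|V^{*m}\|_{L^2}^2$ as a $\kappa_m$-weighted sum of autocorrelations, apply the polarization identity for $\|\Delta_h v^{*m}\|_{\ell^2}^2$ with $\sum_h\kappa_m(h)=1$, and then telescope and use the second moment $m/6$. The only cosmetic difference is in removing the real part: the paper takes real parts of both sides, since $\|V^{*m}\|_{L^2}^2$ is real, while you use the evenness of $\kappa_m$ to show the cross term is real; both are valid.
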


\begin{proof}
Write $a=v^{*m}$. Expanding the convolution in \eqref{eq:step},
we obtain
$$
 V^{*m}(x)=\sum_{k\in\Z}a(k)\chi^{*m}(x-k).
$$
Hence
$$
 \|V^{*m}\|_{L^2}^2 = \langle V^{*m}, V^{*m}\rangle
 =\sum_{k,\ell\in\Z}a(k)\overline{a(\ell)}
      \int_\R\chi^{*m}(x-k)\chi^{*m}(x-\ell)\,dx
$$
By setting $h=\ell-k$ and recalling the definition of $\kappa_m(h)$, we obtain
$$
 \|V^{*m}\|_{L^2}^2 = \sum_{h\in\Z}\kappa_m(h)
                     \sum_{k\in\Z}a(k)\overline{a(k+h)}.
$$
For each $h$, let $T_ha$ be the function defined by $T_ha(k) = a(k+h)$, so that $\Delta_ha = T_ha-a$. Taking real parts of both sides of the equation above, we obtain
$$
 \|V^{*m}\|_{L^2}^2 = \sum_{h\in\Z}\kappa_m(h) \cdot \operatorname{Re}\langle a,T_ha\rangle.
$$
By translation invariance, we have $\|T_ha\|_{\ell^2} = \|a\|_{\ell^2}$  and hence
$$
\|\Delta_ha\|_{\ell^2}^2 = \langle T_ha-a,T_ha-a\rangle = 2\|a\|_{\ell^2}^2 - 2\operatorname{Re}\langle a,T_ha\rangle.
$$
It follows that
$$
 \|V^{*m}\|_{L^2}^2
 =\sum_{h\in\Z}\kappa_m(h)
       \Big(\|a\|_{\ell^2}^2-\frac12\|\Delta_ha\|_{\ell^2}^2\Big)
 =\|a\|_{\ell^2}^2
       -\frac12\sum_{h\in\Z}\kappa_m(h)\|\Delta_ha\|_{\ell^2}^2,
$$
where the second equality follows from
$\sum_h\kappa_m(h)=1$ in Lemma \ref{lem:kappa}. This proves the first assertion of the proposition.

For $h\geq1$, telescoping gives
$$
 \Delta_ha(k)=\sum_{j=0}^{h-1}\Delta_1a(k+j).
$$
The triangle inequality gives
$\|\Delta_ha\|_{\ell^2}\leq |h|\|\Delta_1a\|_{\ell^2}$ for
$h\geq1$. Since $\Delta_{-h}a(k)=-\Delta_ha(k-h)$, the same bound holds for all $h \in \Z$. Hence
$$
 0\leq\|v^{*m}\|_{\ell^2}^2-\|V^{*m}\|_{L^2}^2
  \leq\frac12\Big(\sum_{h\in\Z}h^2\kappa_m(h)\Big)
                         \|\Delta_1a\|_{\ell^2}^2
   =\frac m{12}\|\Delta_1a\|_{\ell^2}^2,
$$
where we used Lemma \ref{lem:kappa} in the last equality.
This completes the proof.
\end{proof}

We next use Proposition \ref{prop:identity} to prove a discrete
version of the sharp Euclidean inequality
\eqref{eq:young-euclidean}. For example, if $v$ is a balanced function with $v(0)=1$ and $\|v\|_{\ell^p}\rightarrow\infty$, then Proposition \ref{prop:comparison} leads to $\|v^{*m}\|_2^2 \leq (1+o(1)) C_m^{-1} \|v\|_{\ell^p}^{2m}$. Indeed, a nonnegative finitely supported balanced function satisfies $\|\Delta_1v\|_{\ell^1}=2v(0)=2\|v\|_{\ell^\infty}$, so the hypothesis of the proposition holds.

\begin{proposition}\label{prop:comparison}
Let $v:\Z\to[0,\infty)$ be finitely supported and suppose that
$$
 \|\Delta_1v\|_{\ell^1}\leq4\|v\|_{\ell^\infty}.
$$
Then
$$
 \|v^{*m}\|_{\ell^2}^2
 \leq C_m^{-1}\|v\|_{\ell^p}^{2m}
 +\frac{4m}{3}\|v\|_{\ell^\infty}^{2p}
                 \|v\|_{\ell^p}^{2m-2p}.
$$
\end{proposition}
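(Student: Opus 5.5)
The plan is to combine Proposition \ref{prop:identity} with the sharp Euclidean inequality \eqref{eq:young-euclidean}, and to control the error term $\|\Delta_1 v^{*m}\|_{\ell^2}^2$ using Young's inequality and interpolation. Let $V$ be the step interpolation \eqref{eq:step}, so that $\|V\|_{L^p}=\|v\|_{\ell^p}$. Proposition \ref{prop:identity} and \eqref{eq:young-euclidean} give
$$
 \|v^{*m}\|_{\ell^2}^2\leq \|V^{*m}\|_{L^2}^2+\frac m{12}\|\Delta_1 v^{*m}\|_{\ell^2}^2
 \leq C_m^{-1}\|v\|_{\ell^p}^{2m}+\frac m{12}\|\Delta_1 v^{*m}\|_{\ell^2}^2 .
$$
Since $\frac{m}{12}\cdot 16=\frac{4m}{3}$, it suffices to prove
$$
 \|\Delta_1 v^{*m}\|_{\ell^2}\leq 4\|v\|_{\ell^\infty}^{p}\|v\|_{\ell^p}^{m-p}.
$$

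The key structural fact is that differencing commutes with convolution: $\Delta_1(v^{*m})=(\Delta_1 v)*v^{*(m-1)}$. Young's inequality \eqref{eq:young-general} with exponents $(1,2)\to2$ then gives
$$
 \|\Delta_1 v^{*m}\|_{\ell^2}\leq \|\Delta_1 v\|_{\ell^1}\,\|v^{*(m-1)}\|_{\ell^2}\leq 4\|v\|_{\ell^\infty}\|v^{*(m-1)}\|_{\ell^2},
$$
where the second step uses the hypothesis. This is the only place the hypothesis $\|\Delta_1v\|_{\ell^1}\leq4\|v\|_{\ell^\infty}$ enters.

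It remains to show $\|v^{*(m-1)}\|_{\ell^2}\leq \|v\|_{\ell^\infty}^{p-1}\|v\|_{\ell^p}^{m-p}$. Apply Young's inequality \eqref{eq:young-general} with $m-1$ equal exponents $r$, where $(m-1)/r-(m-2)=1/2$, so $r=2(m-1)/(2m-3)$. This gives $\|v^{*(m-1)}\|_{\ell^2}\leq\|v\|_{\ell^r}^{m-1}$. (For $m=2$ this is trivial with $r=2$.) One checks $p\leq r\leq 2$, so \eqref{eq:interpolation-infinity} yields
$$
 \|v\|_{\ell^r}\leq \|v\|_{\ell^\infty}^{1-p/r}\|v\|_{\ell^p}^{p/r}.
$$
Now compare exponents. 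For $\|v\|_{\ell^p}$ we get $(m-1)p/r=p(2m-3)/2=m(2m-3)/(2m-1)$, which equals $m-p$. For $\|v\|_{\ell^\infty}$ we get $(m-1)-(m-p)=p-1$. Combining the three displays gives the claimed bound, with total $\ell^\infty$ exponent $2+2(p-1)=2p$ after squaring.

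The main point requiring care is this exponent bookkeeping: it must produce exactly $\|v\|_{\ell^\infty}^{2p}\|v\|_{\ell^p}^{2m-2p}$ and hence the constant $4m/3$. The computation above confirms that it does. I expect no genuine obstacle beyond verifying that Young's inequality is applied with admissible exponents.
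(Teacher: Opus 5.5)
Your proposal is correct and follows essentially the same argument as the paper. You combine Proposition \ref{prop:identity} with \eqref{eq:young-euclidean}, then bound $\|\Delta_1 v^{*m}\|_{\ell^2}$ via $\Delta_1(v^{*m})=(\Delta_1 v)*v^{*(m-1)}$, Young's inequality with $r=2(m-1)/(2m-3)$, and interpolation \eqref{eq:interpolation-infinity}. Your exponent bookkeeping matches the paper's.
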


\begin{proof}
Let $V$ be the interpolation in \eqref{eq:step}. Since
$\|V\|_{L^p}=\|v\|_{\ell^p}$, the sharp Euclidean inequality
\eqref{eq:young-euclidean} gives
$$
 \|V^{*m}\|_{L^2}^2\leq C_m^{-1}\|v\|_{\ell^p}^{2m}.
$$
By Proposition \ref{prop:identity}, it remains to estimate
$\|\Delta_1(v^{*m})\|_{\ell^2}$. Since
$\Delta_1(v^{*m})=(\Delta_1v)*v^{*(m-1)}$, an application of
Young's inequality yields
$$
 \|\Delta_1(v^{*m})\|_{\ell^2}
 \leq \|\Delta_1v\|_{\ell^1}\cdot\|v^{*(m-1)}\|_{\ell^2}.
$$
Set $r=2(m-1)/(2m-3)$. By Young's convolution inequality \eqref{eq:young-general} again, we have
$$
 \|v^{*(m-1)}\|_{\ell^2}
 \leq\|v\|_{\ell^r}^{m-1}.
 $$
Since $r>p$ and $(m-1)/r=(m-p)/p$, the interpolation
inequality \eqref{eq:interpolation-infinity} gives
$$
\|v\|_{\ell^r}^{m-1}
 \leq\|v\|_{\ell^\infty}^{p-1}\cdot\|v\|_{\ell^p}^{m-p}.
$$
Combining the three inequalities above, we obtain
\begin{align*}
 \|\Delta_1(v^{*m})\|_{\ell^2}
 &\leq\|\Delta_1v\|_{\ell^1}\cdot
           \|v\|_{\ell^\infty}^{p-1}\cdot\|v\|_{\ell^p}^{m-p}\\
 &\leq4\|v\|_{\ell^\infty}^{p}\cdot\|v\|_{\ell^p}^{m-p}.
\end{align*}
Substituting this bound into Proposition \ref{prop:identity} completes the proof.
\end{proof}

\section{A compactness argument}\label{sec:compactness}

In the proof of Theorem \ref{thm:main} in Section \ref{sec:main-proof}, we will consider a sequence $\{f_j\}$ of nonnegative balanced functions
with $\|f_j\|_{\ell^p}=1$, converging pointwise to $g$.
The function $g$ is also balanced, and Fatou's lemma gives
$\rho:=\|g\|_{\ell^p}^p\leq1$. The purpose of this section is to obtain estimates for certain norms of $f_j^{*m}$ in terms of $\rho$. We start with a lemma concerning the tail of $g$.

\begin{lemma}\label{lem:tail-product}
Let $g$ be a nonnegative balanced function in $\ell^p(\Z)$.
For an integer $R\geq1$, let $I_R = [-R,R] \cap \Z$ and define
$$
 S_R=\|g\1_{I_R}\|_{\ell^1},\qquad
 M_R=\sup_{x\notin I_R}g(x).
$$
Then $S_RM_R^{p-1}\to0$ as $R\to\infty$.
\end{lemma}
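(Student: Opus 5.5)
Since $g$ is balanced, its values listed in the order $0,1,-1,2,-2,\ldots$ are nonincreasing. The plan is to replace $g$ by this nonincreasing sequence and split the sum $S_R$ into a head and a tail. Let $b_0\geq b_1\geq b_2\geq\cdots$ be this rearrangement, so $b_0=g(0)$, $b_1=g(1)$, $b_2=g(-1)$, and so on. The set $I_R$ consists of exactly the first $2R+1$ positions in this order. Hence
$$
S_R=\sum_{k\leq 2R} b_k,\qquad M_R=b_{2R+1}.
$$
The assumption $g\in\ell^p$ becomes $\sum_k b_k^p<\infty$. So the statement reduces to a fact about a nonincreasing nonnegative sequence: if $\sum_k b_k^p<\infty$ with $1<p<2$, then $b_N^{p-1}\sum_{k<N}b_k\to0$ as $N\to\infty$.

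To prove this, fix $\eta>0$ and choose $K$ with $\sum_{k\geq K}b_k^p<\eta$. For $N>K$, split
$$
\sum_{k<N}b_k=\sum_{k<K}b_k+\sum_{K\leq k<N}b_k.
$$
\emph{Head.} The first sum is a fixed finite constant. It gets multiplied by $b_N^{p-1}$, and $b_N\to0$ because $b_N^p\to0$. So the head contribution tends to $0$.

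\emph{Tail.} For $K\leq k<N$ we have $b_k\geq b_N$, by monotonicity. Therefore $b_N^{p-1}b_k\leq b_k^{p-1}b_k=b_k^p$. Summing over these $k$ bounds the tail contribution by $\sum_{k\geq K}b_k^p<\eta$.

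Taking $\limsup$ as $N\to\infty$ gives a bound of at most $\eta$, and $\eta$ was arbitrary. Applying this with $N=2R+1$ gives $S_RM_R^{p-1}\to0$.

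Once the problem is rewritten as a statement about a monotone sequence, there is no real obstacle. The only point needing care is the tail estimate: it must use $b_N\leq b_k$ for every $k<N$, and this comes precisely from the balanced ordering, since every point of $I_R$ precedes every point outside $I_R$ in that order. We also use $p-1>0$, which holds because $p=2m/(2m-1)>1$. If $S_R$ or $M_R$ is zero, the product is trivially $0$.
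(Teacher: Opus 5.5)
Your proof is correct. It shares the paper's first step: list the values of $g$ in the balanced order, so that $S_R$ is a sum of initial terms of a nonincreasing sequence and $M_R$ is the next term. The core estimate, however, is different.

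\emph{The paper's route.} It first shows $Na_{N+1}^p\to0$ by comparing $a_{N+1}^p$ with the block of terms $a_i^p$ for $\lfloor N/2\rfloor<i\leq N+1$. It then bounds the head by H\"older's inequality, $\sum_{i\leq N}a_i\leq N^{1-1/p}\|g\|_{\ell^p}$. Together these give the single closed-form bound $\|g\|_{\ell^p}(Na_{N+1}^p)^{1-1/p}$.

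\emph{Your route.} You use the termwise inequality $b_N^{p-1}b_k\leq b_k^p$ for $k<N$. This is really dominated convergence for series: each term $b_kb_N^{p-1}\1_{\{k<N\}}$ tends to $0$ as $N\to\infty$, and it is dominated by the summable $b_k^p$. Your head/tail split with $\eta$ is exactly the hand proof of that.

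\emph{Comparison.} Your argument is slightly more elementary, since it needs neither H\"older nor the fact that $Na_N^p\to0$ for a monotone $\ell^p$ sequence. The paper's argument yields an explicit rate in terms of the $\ell^p$ tail of $g$. The lemma is only used qualitatively in Proposition~\ref{prop:energy-limit}, so either suffices. One minor point: only $p>1$ is needed, not $p<2$.
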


\begin{proof}
List the values of $g$ in decreasing order as
$a_1\geq a_2\geq\cdots\geq0$. Since $g$ is balanced, it suffices to prove that
$$
 \Big(\sum_{i\leq N}a_i\Big)a_{N+1}^{p-1}\longrightarrow0
$$
as $N\rightarrow\infty$.
Monotonicity  gives
$$
 \frac{N}{2}a_{N+1}^p\leq \sum_{i>\lfloor N/2\rfloor}a_i^p.
$$
Since $g \in \ell^p(\Z)$, the right-hand side above tends to $0$ as $N\rightarrow\infty$, and hence so does the left-hand side.
By H\"older's inequality,
\begin{align*}
 \Big(\sum_{i\leq N}a_i\Big)a_{N+1}^{p-1}
 &\leq N^{1-1/p}\Big(\sum_{i\leq N}a_i^p\Big)^{1/p}
                         a_{N+1}^{p-1}\\
 &\leq\|g\|_{\ell^p}(Na_{N+1}^p)^{1-1/p}\longrightarrow0.
\end{align*}
This completes the proof.
\end{proof}

We now obtain bounds for $\|f_j^{*m}\|_{\ell^2}$, depending on the behavior of the limit $g$. If $\rho \approx 0$, then $f_j$ is flat as $j$ grows and behaves like a continuous function, in which case we obtain savings by a factor of roughly $C_m^{-1}$. Recall that $s = 2m-1 = 2m/p$.

\begin{proposition}\label{prop:energy-limit}
Let $\{f_j\}$ be a sequence of finitely supported, nonnegative, balanced functions with
$\|f_j\|_{\ell^p}=1$. Suppose that $f_j(x)\to g(x)$ for every
$x\in\Z$, and set $\rho=\|g\|_{\ell^p}^p$. Then
$$
 \limsup_{j\to\infty}\|f_j^{*m}\|_{\ell^2}^2
 \leq\rho^s+C_m^{-1}(1-\rho)^s.
$$
\end{proposition}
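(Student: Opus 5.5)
We fix a large integer $R$, write $I_R=[-R,R]\cap\Z$ as in Lemma \ref{lem:tail-product}, and split $f_j=u_j+w_j$ with $u_j=f_j\1_{I_R}$ and $w_j=f_j\1_{\Z\setminus I_R}$. We let $j\to\infty$ first and then $R\to\infty$.

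\emph{Behaviour of the pieces.} Since $I_R$ is finite and $f_j\to g$ pointwise, $u_j\to g\1_{I_R}$ in every $\ell^r$ norm. Hence $\|u_j\|_{\ell^p}^p\to\rho_R:=\|g\1_{I_R}\|_{\ell^p}^p\leq\rho$ and $\|u_j\|_{\ell^1}\to S_R$. Because $\|f_j\|_{\ell^p}=1$, we get $\|w_j\|_{\ell^p}^p\to1-\rho_R$, and $\rho_R\to\rho$ as $R\to\infty$. Since $f_j$ is balanced, its values outside $I_R$ are at most its values at $\pm R$ and $\pm(R+1)$. Therefore $\limsup_j\|w_j\|_{\ell^\infty}\leq M_{R-1}$, which tends to $0$ as $R\to\infty$.

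\emph{Expansion and cross terms.} We expand
\[
 \|f_j^{*m}\|_{\ell^2}^2=\langle (u_j+w_j)^{*m},(u_j+w_j)^{*m}\rangle .
\]
Every term other than $\|u_j^{*m}\|_{\ell^2}^2$ and $\|w_j^{*m}\|_{\ell^2}^2$ is an inner product in which at least one factor $u_j$ and at least one factor $w_j$ occur among the $2m$ convolution factors. Each such term can be written as a value at $0$ of a $2m$-fold convolution of $u_j$'s, $w_j$'s and their reflections. We bound it with Young's inequality \eqref{eq:young-general}, taking output exponent $\infty$:
\begin{itemize}
\item one $u_j$ goes in $\ell^1$;
\item one $w_j$ goes in $\ell^b$ with $b=m/(m-1)$;
\item the remaining $2m-2$ factors go in $\ell^p$.
\end{itemize}
The exponents balance: $1+\frac{m-1}{m}+\frac{(2m-2)(2m-1)}{2m}-(2m-1)=0$. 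The interpolation inequality \eqref{eq:interpolation-infinity} gives $\|w_j\|_{\ell^b}\leq\|w_j\|_{\ell^\infty}^{1-p/b}\|w_j\|_{\ell^p}^{p/b}$, and $1-p/b=1/(2m-1)=p-1$. The remaining $\ell^p$ norms are at most $1$. So each cross term is $O_m(\|u_j\|_{\ell^1}\|w_j\|_{\ell^\infty}^{p-1})$, whose $\limsup$ in $j$ is $O_m(S_RM_{R-1}^{p-1})$. By Lemma \ref{lem:tail-product}, applied after the harmless shift $S_R\leq S_{R-1}+2M_{R-1}$ (or by re-indexing), this tends to $0$ as $R\to\infty$.

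\emph{Main terms.} For the first main term, \eqref{eq:young-general} gives $\|u_j^{*m}\|_{\ell^2}^2\leq\|u_j\|_{\ell^p}^{2m}\to\rho_R^{s}\leq\rho^s$.

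For the second, $w_j$ is nonnegative, vanishes on $I_R$, and is nonincreasing in $|x|$ on each of the two tails, because $f_j$ is balanced. Hence $\|\Delta_1w_j\|_{\ell^1}\leq4\|w_j\|_{\ell^\infty}$, and Proposition \ref{prop:comparison} applies:
\[
 \|w_j^{*m}\|_{\ell^2}^2\leq C_m^{-1}\|w_j\|_{\ell^p}^{2m}+\tfrac{4m}{3}\|w_j\|_{\ell^\infty}^{2p}.
\]
Taking $\limsup_j$ gives at most $C_m^{-1}(1-\rho_R)^s+\frac{4m}{3}M_{R-1}^{2p}$.

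\emph{Conclusion and main obstacle.} Combining the three bounds and letting $R\to\infty$ gives $\limsup_j\|f_j^{*m}\|_{\ell^2}^2\leq\rho^s+C_m^{-1}(1-\rho)^s$. The crux is the cross-term estimate. A cruder bound using only $\ell^p$ norms gains nothing, so the exponents have to be arranged so that exactly $\|u\|_{\ell^1}\|w\|_{\ell^\infty}^{p-1}$ appears, which is precisely the quantity controlled by Lemma \ref{lem:tail-product}. The remaining care is the bookkeeping of boundary indices between $M_R$ and $M_{R-1}$.
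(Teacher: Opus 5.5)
Your proof is correct and follows the paper's argument: the same split at $I_R$, Young's inequality for the core piece, Proposition \ref{prop:comparison} for the monotone tails, and Lemma \ref{lem:tail-product} to make the mixed terms negligible. The differences are minor. You expand $\|f_j^{*m}\|_{\ell^2}^2$ fully and bound every mixed term at once with a single $2m$-fold Young estimate (one $u_j$ in $\ell^1$, one $w_j$ in $\ell^{m/(m-1)}$). The paper instead first discards the mixed convolutions in $\ell^2$ and then bounds $\langle u_{j,R}^{*m},v_{j,R}^{*m}\rangle$ separately. Your shift to $M_{R-1}$ is also unnecessary, since balancedness gives $\|w_j\|_{\ell^\infty}=f_j(R+1)\to g(R+1)=M_R$ directly.
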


\begin{proof}
For an integer $R\geq1$, let $I_R = [-R,R]\cap\Z$ and write
$$
 u_{j,R}=f_j\1_{I_R},\qquad
 v_{j,R}=f_j\1_{\Z\setminus I_R},\qquad
 g_R=g\1_{I_R}.
$$
Set
$$
 \rho_R=\|g_R\|_{\ell^p}^p,
 \qquad S_R=\|g_R\|_{\ell^1},
 \qquad M_R=\sup_{x\notin I_R}g(x).
$$
Thus $f_j = u_{j,R} + v_{j,R}$ and we have
$$
f_j^{*m} = u_{j,R}^{*m} + v_{j,R}^{*m} + \sum_{1 \leq b \leq m-1} \binom{m}{b}u_{j,R}^{*(m-b)}*v_{j,R}^{*b}.
$$
We first show that convolutions involving both $u_{j,R}$ and
$v_{j,R}$ make a negligible contribution in $\ell^2$-norm in the limit. Let $1\leq b\leq m-1$ and set $p_b=2b/(2b-1)>p$. By Young's convolution inequality \eqref{eq:young-general} and interpolation \eqref{eq:interpolation-infinity}, we get
\begin{align*}
 \|u_{j,R}^{*(m-b)}*v_{j,R}^{*b}\|_{\ell^2}
 &\leq\|u_{j,R}\|_{\ell^1}^{m-b}
                         \|v_{j,R}\|_{\ell^{p_b}}^b\\
 &\leq\|u_{j,R}\|_{\ell^1}^{m-b}
        \|v_{j,R}\|_{\ell^\infty}^{b-bp/p_b}
        \|v_{j,R}\|_{\ell^p}^{bp/p_b}\\
 &\leq \|u_{j,R}\|_{\ell^1}^{m-b}
        \|v_{j,R}\|_{\ell^\infty}^{b-bp/p_b}.
\end{align*}
As $j\to\infty$, we have $\|u_{j,R}\|_{\ell^1} \rightarrow S_R$ and $\|v_{j,R}\|_{\ell^{\infty}} \rightarrow M_R$ for fixed $R$, since the functions $f_j$ and $g$ are all balanced. More explicitly, $\|v_{j,R}\|_{\ell^\infty}=f_j(R+1)$ and $M_R=g(R+1)$. The identity $b-bp/p_b=(m-b)(p-1)$ therefore gives the required tail bound. By applying Lemma
\ref{lem:tail-product}, we obtain
$$
 \limsup_{j\to\infty}
 \|u_{j,R}^{*(m-b)}*v_{j,R}^{*b}\|_{\ell^2}
 \leq(S_RM_R^{p-1})^{m-b}=o_{R\to\infty}(1).
$$
It follows that
$$
 \limsup_{j\to\infty} \|f_j^{*m}\|_{\ell^2} \leq \limsup_{j\to\infty} \|u_{j,R}^{*m} + v_{j,R}^{*m}\|_{\ell^2} + o_{R\rightarrow\infty}(1).
$$
Since $\|u_{j,R}^{*m}\|_{\ell^2}$ and
$\|v_{j,R}^{*m}\|_{\ell^2}$ are at most $1$ by Young's
inequality, squaring both sides gives
$$
 \limsup_{j\to\infty}\|f_j^{*m}\|_{\ell^2}^2
 \leq\limsup_{j\to\infty}\|u_{j,R}^{*m}\|_{\ell^2}^2
       +\limsup_{j\to\infty}\|v_{j,R}^{*m}\|_{\ell^2}^2+2\limsup_{j\to\infty}
         \langle u_{j,R}^{*m},v_{j,R}^{*m}\rangle
       +o_{R\to\infty}(1).
$$
For the inner product, we have
$$
|\langle u_{j,R}^{*m},v_{j,R}^{*m}\rangle| \leq \|u_{j,R}^{*m}\|_{\ell^1}\|v_{j,R}^{*m}\|_{\ell^\infty} = \|u_{j,R}\|_{\ell^1}^m\cdot\|v_{j,R}^{*m}\|_{\ell^\infty}.
$$
Set $r=m/(m-1)>p$. By Young's inequality  \eqref{eq:young-general} and interpolation \eqref{eq:interpolation-infinity}, we have
$$
 \|v_{j,R}^{*m}\|_{\ell^\infty}
 \leq\|v_{j,R}\|_{\ell^r}^m\leq\|v_{j,R}\|_{\ell^\infty}^{m-mp/r}
        \|v_{j,R}\|_{\ell^p}^{mp/r}\leq\|v_{j,R}\|_{\ell^\infty}^{m-mp/r}.
$$
Note that $m-mp/r = m(p-1)$.
By applying Lemma \ref{lem:tail-product}, we obtain
$$
 \limsup_{j\to\infty}
 |\langle u_{j,R}^{*m},v_{j,R}^{*m}\rangle|
 \leq(S_RM_R^{p-1})^m=o_{R\to\infty}(1).
$$
For the $\ell^2$-norm of $u_{j,R}^{*m}$, note that
$u_{j,R}^{*m}\to g_R^{*m}$ pointwise and all these functions
are supported on the fixed interval $[-mR,mR]\cap\Z$. Hence
$$
\lim_{j\rightarrow\infty} \|u_{j,R}^{*m}\|_{\ell^2}^2 = \|g_R^{*m}\|_{\ell^2}^2 \leq \|g_R\|_{\ell^p}^{2m} = \rho_R^{s}.
$$
It follows that
\begin{equation}\label{eq:split-energy}
 \limsup_{j\to\infty}\|f_j^{*m}\|_{\ell^2}^2 \leq
 \rho_R^{s}
 +\limsup_{j\to\infty}\|v_{j,R}^{*m}\|_{\ell^2}^2
 +o_{R\to\infty}(1).
\end{equation}
For the tail $v_{j,R}$, since it is decreasing on $(R, +\infty)$ and increasing on $(-\infty,-R)$, we have $\|\Delta_1v_{j,R}\|_{\ell^1} \leq 4\|v_{j,R}\|_{\ell^{\infty}}$. Hence by Proposition \ref{prop:comparison} we obtain
$$
\|v_{j,R}^{*m}\|_{\ell^2}^2 \leq C_m^{-1}\|v_{j,R}\|_{\ell^p}^{2m} + \frac{4m}{3}\|v_{j,R}\|_{\ell^{\infty}}^{2p} \|v_{j,R}\|_{\ell^p}^{2m-2p}.
$$
Since $\|v_{j,R}\|_{\ell^p}^p = 1 - \|u_{j,R}\|_{\ell^p}^p \rightarrow 1-\rho_R$, we get
$$
 \limsup_{j\to\infty}\|v_{j,R}^{*m}\|_{\ell^2}^2
 \leq C_m^{-1}(1-\rho_R)^s
       +\frac{4m}{3}M_R^{2p}(1-\rho_R)^{s-2}.
$$
Since $M_R = o_{R\rightarrow\infty}(1)$, we conclude by substituting the above inequality into \eqref{eq:split-energy} that
$$
 \limsup_{j\to\infty}\|f_j^{*m}\|_{\ell^2}^2 \leq \rho_R^s + C_m^{-1}(1-\rho_R)^s + o_{R\rightarrow\infty}(1).
 $$
Finally, letting $R\rightarrow\infty$ completes the proof since $\rho_R\rightarrow\rho$.
\end{proof}

\begin{proposition}\label{prop:q-limit}
Under the hypotheses of Proposition \ref{prop:energy-limit},
suppose also that $|\supp f_j|\leq n_j$ and $n_j\to\infty$.
Fix $\lambda>0$ and, for all sufficiently large $j$, define $q_j$ by
$$
 \frac1{q_j}=\frac1p-\frac{\lambda}{2m\log n_j}.
$$
Then
$$
 \liminf_{j\to\infty}\|f_j\|_{\ell^{q_j}}^{2m}
 \geq\big(\rho+e^{-\lambda/s}(1-\rho)\big)^s.
$$
\end{proposition}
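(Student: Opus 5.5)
We split $f_j = u_{j,R}+v_{j,R}$ exactly as in the proof of Proposition \ref{prop:energy-limit}, with $I_R=[-R,R]\cap\Z$, $u_{j,R}=f_j\1_{I_R}$ and $v_{j,R}=f_j\1_{\Z\setminus I_R}$. Since the supports are disjoint,
$$
 \|f_j\|_{\ell^{q_j}}^{q_j}=\|u_{j,R}\|_{\ell^{q_j}}^{q_j}+\|v_{j,R}\|_{\ell^{q_j}}^{q_j},
$$
and we bound the two pieces from below separately.

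For the head, $u_{j,R}$ is supported on the fixed finite set $I_R$ and converges pointwise to $g_R$. Since $q_j\to p$, we get
$$
 \|u_{j,R}\|_{\ell^{q_j}}^{q_j}=\sum_{|x|\leq R}f_j(x)^{q_j}\longrightarrow\sum_{|x|\leq R}g(x)^p=\rho_R .
$$

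For the tail, $|\supp v_{j,R}|\leq n_j$, so H\"older's inequality \eqref{eq:support-norms} gives
$$
 \|v_{j,R}\|_{\ell^{q_j}}\geq n_j^{1/q_j-1/p}\|v_{j,R}\|_{\ell^p}=e^{-\lambda/(2m)}\|v_{j,R}\|_{\ell^p}.
$$
Raising this to the power $q_j$, we obtain $\|v_{j,R}\|_{\ell^{q_j}}^{q_j}\geq e^{-\lambda q_j/(2m)}\big(\|v_{j,R}\|_{\ell^p}^p\big)^{q_j/p}$. We also have $\|v_{j,R}\|_{\ell^p}^p=1-\|u_{j,R}\|_{\ell^p}^p\to1-\rho_R$. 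Since $q_j\to p$ and $p/(2m)=1/s$, the right-hand side converges to $e^{-\lambda/s}(1-\rho_R)$. We need some care when $1-\rho_R=0$, but then the bound is trivially $\geq0$ and the continuity of $t\mapsto t^{q_j/p}$ on $[0,1]$ handles it uniformly.

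Combining the two pieces,
$$
 \liminf_{j\to\infty}\|f_j\|_{\ell^{q_j}}^{q_j}\geq\rho_R+e^{-\lambda/s}(1-\rho_R).
$$
We then raise this to the power $2m/q_j\to 2m/p=s$. The quantity $\|f_j\|_{\ell^{q_j}}^{q_j}$ is bounded, lying between $0$ and $1$ since $\|f_j\|_{\ell^{q_j}}\leq\|f_j\|_{\ell^p}=1$, and the exponents converge. Hence the liminf of $\|f_j\|_{\ell^{q_j}}^{2m}$ is at least $\big(\rho_R+e^{-\lambda/s}(1-\rho_R)\big)^s$. Letting $R\to\infty$ and using $\rho_R\to\rho$ gives the claim.

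The only delicate point is this exchange of limits with varying exponents. It is harmless because all quantities lie in $[0,1]$ and the map $(t,\alpha)\mapsto t^\alpha$ is jointly continuous there for $\alpha$ near $s>0$. No balancedness is needed beyond what guarantees the pointwise limit; in fact even the splitting at $R$ could be replaced by any finite set.
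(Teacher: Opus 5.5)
Your proposal is correct and follows essentially the same route as the paper: you split the $\ell^{q_j}$ sum at $I_R$, pass to the limit on the finite head, apply H\"older's inequality \eqref{eq:support-norms} on the tail using $|\supp f_j|\leq n_j$, then raise to the power $2m/q_j\to s$ and let $R\to\infty$. Your observation that $n_j^{1/q_j-1/p}=e^{-\lambda/(2m)}$ holds exactly is a slightly cleaner way to get the factor $e^{-\lambda/s}$ than the paper's limit computation of $n_j^{1-q_j/p}$.
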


\begin{proof}
For an integer $R\geq1$, write $I_R = [-R,R]\cap\Z$ and
$\rho_R=\sum_{x\in I_R}g(x)^p$ (as in the proof of Proposition
\ref{prop:energy-limit}). Since $f_j(x)\to g(x)$ and $q_j\to p$, we have
$$
 \lim_{j\rightarrow\infty} \sum_{x\in I_R}f_j(x)^{q_j} = \rho_R.
$$
Now consider the contribution from $x \notin I_R$. Since there are at most $n_j$ nonzero terms, H\"{o}lder's inequality (see \eqref{eq:support-norms}) gives
$$
 \sum_{x\notin I_R} f_j(x)^{p}
 \leq n_j^{1-p/q_j}
       \Big(\sum_{x\notin I_R}f_j(x)^{q_j}\Big)^{p/q_j},
$$
which leads to
$$
 \sum_{x\notin I_R}f_j(x)^{q_j}
 \geq n_j^{1-q_j/p}
       \Big(\sum_{x\notin I_R}f_j(x)^p\Big)^{q_j/p}.
$$
Since $\sum_xf_j(x)^p=1$, we have
$$
 \lim_{j\rightarrow\infty} \sum_{x\notin I_R}f_j(x)^p
 = \lim_{j\rightarrow\infty} \Big(1-\sum_{x\in I_R}f_j(x)^p\Big)=1-\rho_R.
$$
The definition of $q_j$ gives
$$
 \frac{q_j}{p}=\Big(1-\frac{\lambda}{s\log n_j}\Big)^{-1},
 \qquad \Big(\frac{q_j}{p}-1\Big)\log n_j
 =\frac{\lambda/s}{1-\lambda/(s\log n_j)}
 \longrightarrow\frac\lambda s.
$$
Hence $\lim_{j\rightarrow\infty} n_j^{1-q_j/p} = e^{-\lambda/s}$. Combining the estimates
above and using $q_j\to p$, we obtain
$$
 \liminf_{j\to\infty}\sum_{x\notin I_R}f_j(x)^{q_j}
 \geq e^{-\lambda/s}(1-\rho_R).
$$
Adding the contribution from $I_R$ gives
$$
 \liminf_{j\to\infty}\sum_xf_j(x)^{q_j}
 \geq\rho_R+e^{-\lambda/s}(1-\rho_R).
$$
Raising both sides to the exponent $2m/q_j$ and using $2m/q_j\to s$ gives 
$$
 \liminf_{j\to\infty} \|f_j\|_{\ell^{q_j}}^{2m}
 \geq\Big(\rho_R+e^{-\lambda/s}(1-\rho_R)\Big)^{s}.
$$
Letting $R\to\infty$ completes the proof.
\end{proof}

\section{Proof of the main theorem}\label{sec:main-proof}

\subsection{Proof of Theorem \ref{thm:main}\textup{(i)}}\label{sec:upper}

We first recall the rearrangement inequality. For a finitely
supported nonnegative function $f$ on $\Z$, let $f^*$ be
obtained by placing the values of $f$ in nonincreasing order at
$$
 0,1,-1,2,-2,\ldots.
$$
Gabriel's rearrangement theorem
\cite[Theorem 4]{gabriel} gives
\begin{equation}\label{eq:rearrangement}
 \|\wh f\|_{L^{2m}(\T)}\leq
 \|\wh{f^*}\|_{L^{2m}(\T)}.
\end{equation}

Fix $0<\eps<1$ and set $\lambda=(1-\eps)\log C_m$. Suppose
that the conclusion fails. Then there exists a sequence $\{n_j\}$ with $n_j\rightarrow\infty$ and a sequence $\{f_j\}$ of functions $f_j: \Z\rightarrow \C$ with $|\supp f_j| \leq n_j$, such that
$$
 \|\wh{f_j}\|_{L^{2m}}^{2m} = \|f_j^{*m}\|_{\ell^2}^2>\|f_j\|_{\ell^{q_j}}^{2m},
 $$
where $q_j$ is defined by
$$
\frac{1}{q_j} = \frac{1}{p} - \frac{\lambda}{2m\log n_j}.
$$
By replacing $f_j$ with $|f_j|$, we may assume that $f_j$ takes nonnegative real values. This replacement cannot decrease the Fourier norm, because $|f_j^{*m}|\leq |f_j|^{*m}$ pointwise and Plancherel identifies the corresponding norms. By the rearrangement inequality, we may assume that $f_j = f_j^*$ is balanced. We may also normalize so that $\|f_j\|_{\ell^p}^p = 1$.

Since $0\leq f_j(x)\leq1$ for every $x\in\Z$, a diagonal subsequence converges pointwise to a function $g$. Then $g$ also takes nonnegative real values and is balanced.
By Fatou's lemma,
$\rho=\|g\|_{\ell^p}^p\in[0,1]$.

By Propositions \ref{prop:energy-limit} and \ref{prop:q-limit},
$$
 \limsup_j\|f_j^{*m}\|_{\ell^2}^2
 \leq\rho^s+C_m^{-1}(1-\rho)^s
$$
and
$$
 \liminf_j\|f_j\|_{\ell^{q_j}}^{2m}
 \geq(\rho+e^{-\lambda/s}(1-\rho))^s.
$$
Note that
$$
e^{-\lambda/s} = C_m^{-(1-\eps)/s} > C_m^{-1/s}.
$$
Hence if $\rho<1$ then
$$
 \rho^s+C_m^{-1}(1-\rho)^s
 \leq(\rho+C_m^{-1/s}(1-\rho))^s
 <(\rho+e^{-\lambda/s}(1-\rho))^s,
$$
contradicting the choice of $f_j$. Thus $\rho=1$.

Next we show that $f_j\to g$ in $\ell^p$. This follows
from the Br\'ezis--Lieb lemma \cite{brezis-lieb}, but in this
setting it can be proved directly as follows. For fixed $R$, pointwise
convergence on the finite set $I_R$ gives
$$
 \sum_{x\in I_R}|f_j(x)-g(x)|^p\longrightarrow0.
$$
On its complement, we have
\begin{align*}
 \limsup_{j\to\infty}\sum_{x\notin I_R}|f_j(x)-g(x)|^p
 &\leq2^{p-1}\lim_{j\to\infty}
       \sum_{x\notin I_R}\big(f_j(x)^p+g(x)^p\big)\\
 &=2^p\big(1-\|g\1_{I_R}\|_{\ell^p}^p\big).
\end{align*}
Since $\|g\|_{\ell^p}=1$, the last expression tends to zero as
$R\to\infty$. Combining the two estimates proves convergence
in $\ell^p$.

Next we show that $g = \1_{\{0\}}$.
Expanding the difference of the convolution powers and applying
Young's inequality \eqref{eq:young-general}, we obtain
\begin{align*}
 \|f_j^{*m}-g^{*m}\|_{\ell^2}
 &\leq\|f_j-g\|_{\ell^p}
       \sum_{r=0}^{m-1}\|f_j\|_{\ell^p}^r\|g\|_{\ell^p}^{m-1-r}\\
 &=m\|f_j-g\|_{\ell^p}\longrightarrow0.
\end{align*}
Moreover, Proposition \ref{prop:q-limit} with $\rho=1$ gives
$\liminf_j\|f_j\|_{\ell^{q_j}}\geq1$. Since
$\|f_j\|_{\ell^{q_j}}\leq\|f_j\|_{\ell^p}=1$, this forces
$\lim_j\|f_j\|_{\ell^{q_j}}=1$. By the construction of $f_j$,
we then have
$$
\liminf_{j\to\infty} \|f_j^{*m}\|_{\ell^2} \geq 1.
$$
Since $\|f_j^{*m}\|_{\ell^2} \leq \|f_j\|_{\ell^p}^m = 1$, this forces $\lim_j \|f_j^{*m}\|_{\ell^2}=1$, and hence $\|g^{*m}\|_{\ell^2}=1=\|g\|_{\ell^p}^m$.
By Lemma \ref{lem:rigidity}, $g$ is supported on one point.
Since $g$ is balanced,
we have $g=\1_{\{0\}}$.

Finally, write $f_j=x_j\1_{\{0\}}+h_j$, where $x_j=f_j(0)$.
Then $x_j\to1$ and $\|h_j\|_{\ell^p}\to0$. Lemma
\ref{lem:local} applies for sufficiently large $j$ and gives
$$
 \|f_j^{*m}\|_{\ell^2}^2\leq\|f_j\|_{\ell^{q_j}}^{2m},
$$
a contradiction.

\subsection{Proof of Theorem \ref{thm:main}\textup{(ii)}}\label{sec:lower}

We now prove Theorem \ref{thm:main}\textup{(ii)} by constructing a truncated discrete Gaussian. For $m=2$, this is precisely the construction used in
\cite[Proposition 2.2]{shao-cubes}. The same
argument applies for every fixed $m\geq2$. 

Let $n$ be sufficiently large in terms of $m,\eps$. Set $\lambda=(1+\eps)\log C_m$ and define $q=q_n$ by
$$
\frac{1}{q} = \frac{1}{p} - \frac{\lambda}{2m\log n}.
$$
 Take
$$
\begin{gathered}
 G(x)=e^{-\pi x^2},\qquad L_n=\frac{n}{3\sqrt{\log n}},\\[4pt]
 f_n(k)=G(k/L_n)
 \1_{\{-\lfloor(n-1)/2\rfloor\leq k\leq\lfloor n/2\rfloor\}}.
\end{gathered}
$$
Then $f_n$ is supported on exactly $n$ consecutive integers, for both odd and even $n$. Let $V_n$ be its piecewise constant interpolation as in
\eqref{eq:step}:
$$
V_n(x) = \sum_{k\in\Z} f_n(k) \1_{[0,1)}(x-k).
$$
Set $F_n(x)=V_n(L_nx)$. Then $F_n(x)\to G(x)$
for every $x\in\R$. 

By Proposition \ref{prop:identity}, we have
$$
 \|\wh f_n\|_{L^{2m}}
 =\|f_n^{*m}\|_{\ell^2}^{1/m}
 \geq\|V_n^{*m}\|_{L^2}^{1/m}
 =L_n^{1/p}\|\wh F_n\|_{L^{2m}}.
$$
Also, $\|f_n\|_{\ell^q}=\|V_n\|_{L^q}
=L_n^{1/q}\|F_n\|_{L^q}$. Consequently,
$$
 \frac{\|\wh f_n\|_{L^{2m}}}{\|f_n\|_{\ell^q}}
 \geq L_n^{1/p-1/q}
       \frac{\|\wh F_n\|_{L^{2m}}}{\|F_n\|_{L^q}}.
$$
Since $\log L_n/\log n\to1$, we have
$$
 L_n^{1/p-1/q}
 =\exp\Big(\frac{\lambda\log L_n}{2m\log n}\Big)
 \longrightarrow e^{\lambda/(2m)}.
$$

To justify the limiting norms, note that, for all sufficiently large $n$,
$L_n\geq1$ and $0\leq F_n(x)\leq1$. Whenever $F_n(x)\neq0$,
it equals $G(k/L_n)$ for an integer $k$ with
$|x-k/L_n|\leq L_n^{-1}\leq1$. Thus
$$
 F_n(x)\leq
 H(x):=\exp\big(-\pi(\max\{|x|-1,0\})^2\big).
$$
The function $H^p$ is integrable. Since $F_n(x)\to G(x)$,
dominated convergence gives $\|F_n-G\|_{L^p}\to0$.
The Hausdorff--Young inequality then implies
$$
 \|\wh F_n-\wh G\|_{L^{2m}}
 \leq C_m^{-1/(2m)}\|F_n-G\|_{L^p}\longrightarrow0.
$$
Moreover, $q=q_n\to p$ and $q_n\geq p$, so
$F_n(x)^{q_n}\leq H(x)^p$. Another application of dominated
convergence gives
$$
 \int_\R F_n(x)^{q_n}\,dx\longrightarrow\int_\R G(x)^p\,dx,
 \qquad \|F_n\|_{L^{q_n}}\longrightarrow\|G\|_{L^p}.
$$
Since $G$ attains equality in \eqref{eq:euclidean-HY}, we conclude that
$$
 \frac{\|\wh F_n\|_{L^{2m}}}{\|F_n\|_{L^q}}
 \longrightarrow\frac{\|\wh G\|_{L^{2m}}}{\|G\|_{L^p}}
 =C_m^{-1/(2m)}.
$$

Combining the two limits above, we get
$$
 \liminf_{n\to\infty}
 \frac{\|\wh f_n\|_{L^{2m}}}{\|f_n\|_{\ell^q}}
 \geq e^{\lambda/(2m)}C_m^{-1/(2m)}
 =C_m^{\eps/(2m)}>1.
$$

Consequently, the ratio exceeds $1$ for every sufficiently large $n$,
which proves part \textup{(ii)}.
Parts \textup{(i)} and \textup{(ii)}, together with monotonicity of
the $\ell^q$ norm in $q$, give
$$
 (1-\eps)\log C_m
 \leq 2m\log n\Big(\frac1p-\frac1{q_m(n)}\Big)
 \leq (1+\eps)\log C_m
$$
for all sufficiently large $n$. Letting $\eps\to0$ establishes
the limit asserted in Theorem \ref{thm:main}.

This completes the proof.

\subsection{Proof of Corollary \ref{cor:energy}}

Corollary \ref{cor:energy} follows immediately from Theorem \ref{thm:main} once we establish the following proposition, which is essentially \cite[Proposition 21]{dios-pont-et-al}.

\begin{proposition}\label{prop:tensorization}
For integers $m,n\geq2$, one has
$$
 t_{m,n}=\frac{2m}{q_m(n)}.
$$
\end{proposition}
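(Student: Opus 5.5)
We prove the two inequalities $t_{m,n}\leq 2m/q_m(n)$ and $t_{m,n}\geq 2m/q_m(n)$ separately. Both rest on one translation: for $f=\1_A$, Plancherel gives $E_m(A)=\|\wh{\1_A}\|_{L^{2m}}^{2m}$, and $\|\1_A\|_{\ell^q}^{2m}=|A|^{2m/q}$. To pass from $\{0,\dots,n-1\}^d$ to one dimension we use base-$N$ digit embeddings, with $N$ large compared with $mn$. The map
$$
\phi(x_1,\ldots,x_d)=\sum_i x_iN^{i-1}
$$
is a Freiman isomorphism of order $m$ on $\{0,\ldots,n-1\}^d$: no carries occur in sums of $m$ elements, so additive $2m$-tuple relations are preserved exactly.

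\textbf{Upper bound $t_{m,n}\leq 2m/q$ for every $q<q_m(n)$.} Fix $q<q_m(n)$. By monotonicity of $\ell^q$ norms, \eqref{eq:HY-threshold} holds at exponent $q$ for every $f$ with $|\supp f|\leq n$. The key step is tensorization. For $F:\{0,\ldots,n-1\}^d\to\C$, the quantity $\|F^{*m}\|_{\ell^2(\Z^d)}^2=\|\wh F\|_{L^{2m}(\T^d)}^{2m}$ is to be bounded by $\|F\|_{\ell^q}^{2m}$. We induct on $d$, integrating out one coordinate at a time. Write $\wh F(\theta_1,\theta')=\sum_{x_1}\wh{F(x_1,\cdot)}(\theta')e(-x_1\theta_1)$. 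For fixed $\theta'$, apply the one-dimensional inequality in $\theta_1$ to the function $x_1\mapsto \wh{F(x_1,\cdot)}(\theta')$, which has support of size at most $n$. This gives
$$
\int|\wh F|^{2m}d\theta_1\leq\Big(\sum_{x_1}|\wh{F(x_1,\cdot)}(\theta')|^q\Big)^{2m/q}.
$$
Integrating over $\theta'$ and using Minkowski's inequality (valid since $2m/q\geq1$), we get
$$
\Big\|\sum_{x_1}|\wh{F(x_1,\cdot)}|^q\Big\|_{L^{2m/q}}\leq\sum_{x_1}\|\wh{F(x_1,\cdot)}\|_{L^{2m}}^q.
$$
Applying the induction hypothesis to each slice then yields $\|\wh F\|_{L^{2m}}\leq\|F\|_{\ell^q}$. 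Taking $F=\1_A$ gives $E_m(A)\leq|A|^{2m/q}$, and letting $q\uparrow q_m(n)$ gives the bound. The case $q_m(n)=\infty$ cannot occur: a point mass shows $q\leq 2m$-type limits, and in any event, if it did, the argument gives the bound for every $q$.

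\textbf{Lower bound $t_{m,n}\geq 2m/q$ for every $q>q_m(n)$.} Pick $f$ with $|\supp f|\leq n$ and $\|\wh f\|_{L^{2m}}>\|f\|_{\ell^q}$. We may take $f\geq0$, since replacing $f$ by $|f|$ does not decrease the left side, as noted in Section~\ref{sec:upper}. By translation we may also take $\supp f\subset\{0,\ldots,n-1\}$. This step uses that $f$ lives on $n$ arbitrary integers, whereas the cube requires $n$ consecutive ones. To fix this, embed the support $\{y_1<\dots<y_n\}$ into $\{0,\dots,n-1\}$ by the order map. That map is not a Freiman isomorphism, so instead we argue that $q_m(n)$ is unchanged when restricting to supports in $\{0,\ldots,n-1\}$. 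Realize an arbitrary $n$-point support $Y$ inside a cube $\{0,\ldots,n-1\}^k$ using a Freiman-type injection $Y\to\{0,\ldots,n-1\}^k$ that sends $y_i$ to coordinates recording $i$, giving an order-$m$ isomorphism after a linear map. Alternatively, and more simply, run the tensorization directly on $\Z^d$ with the one-dimensional constraint "support size at most $n$" everywhere. We then need sets in the cube $\{0,\ldots,n-1\}^d$ only for the final conclusion, and we apply $\phi^{-1}$-type relabelling coordinatewise. The only property actually used is that $E_m$ of a subset of $Y^d$ is bounded by $|A|^{t_{m,n}}$ once $Y^d$ is Freiman-isomorphic of order $m$ to a subset of some cube of side $n$. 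This holds because $Y$ is Freiman isomorphic of order $m$ to a set of $n$ points in $\Z^{k}$ lying in $\{0,\ldots,n-1\}^k$: send $y_i$ to $e_i$-type coordinates. I expect this bookkeeping to be the main obstacle.

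\textbf{Tensor-power construction.} Take tensor powers $f^{\otimes d}$ on $\{0,\ldots,n-1\}^d$. Then
$$
\|(f^{\otimes d})^{*m}\|_2^2=\|f^{*m}\|_2^{2d}\geq(1+c)^d\|f\|_q^{2md}.
$$
Decompose $f^{\otimes d}$ dyadically by value into level sets $A_k$, with $\approx d\log$ many levels. A pigeonhole over $2m$-tuples of levels in $E_m(f^{\otimes d})\leq\big(\sum_k 2^{-k}E_m(A_k)^{1/2m}\big)^{2m}$, by the triangle inequality for $\|\wh\cdot\|_{L^{2m}}$, produces one level set $A$ with
$$
2^{-2mk}E_m(A)\geq(1+c)^d\,\mathrm{poly}(d)^{-1}\|f^{\otimes d}\|_q^{2m}\geq(1+c)^d\,\mathrm{poly}(d)^{-1}2^{-2mk}|A|^{2m/q}.
$$
Hence $E_m(A)\geq|A|^{2m/q}(1+c)^{d}/\mathrm{poly}(d)$. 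Since $|A|\leq n^d$, letting $d\to\infty$ shows that the exponent $2m/q$ is exceeded, so $t_{m,n}\geq 2m/q$. Letting $q\downarrow q_m(n)$ finishes the proof.
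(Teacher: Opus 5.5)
Your upper bound (the Minkowski induction over coordinates) and the tensor-power and level-set pigeonhole for the lower bound are sound. Together they essentially reprove the tensorization identity that the paper cites from de Dios Pont et al.\ for functions on $\{0,\dots,n-1\}$. One small fix: a point mass gives equality for every $q$, so it says nothing about $q_m(n)$. To get $q_m(n)<2m$, which your Minkowski step needs, use $f=\1_{\{0,1\}}$. It gives $\|\wh f\|_{L^{2m}}^{2m}=\binom{2m}{m}$, hence $q_m(n)\le 2m/\log_2\binom{2m}{m}<2m$. The genuine gap is in the lower bound $t_{m,n}\geq 2m/q_m(n)$: you never transfer a violating $f$ supported on $n$ \emph{arbitrary} integers $Y$ to one supported on $\{0,\dots,n-1\}$.

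Your proposed embedding $y_i\mapsto e_i$ goes the wrong way. It destroys every additive relation among the $y_i$, so for $f\geq0$ it can only \emph{decrease} $\|\wh f\|_{L^{2m}}$; the Freiman homomorphism is the inverse map $e_i\mapsto y_i$. For the lower bound you would need an injective Freiman $m$-homomorphism $\psi\colon Y\to\{0,\dots,n-1\}^k$, and in general none exists. Take $Y=\{0,1,2,4\}$ and $m=2$. The relations $0+2=1+1$ and $0+4=2+2$ force $\psi(4)-\psi(0)=4(\psi(1)-\psi(0))$. In $\{0,1,2,3\}^k$ this is impossible coordinatewise unless $\psi(1)=\psi(0)$. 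Running the tensorization on $\Z^d$ does not help either, since the level sets you produce live in $Y^d$, not in the cube. The missing ingredient is the rearrangement inequality \eqref{eq:rearrangement}: for $f\geq0$, $\|\wh f\|_{L^{2m}}\leq\|\wh{f^*}\|_{L^{2m}}$. Here $f^*$ has the same $\ell^q$ norm and is supported on $|\supp f|$ consecutive integers, so a translation places it in $\{0,\dots,n-1\}$. This is exactly how the paper identifies $q_m(n)$ with the exponent for functions on $\{0,\dots,n-1\}$. With that step inserted, your argument goes through.
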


\begin{proof}
\cite[Proposition 21]{dios-pont-et-al} shows  the relation
$$
 t_{m,n}=\frac{2m}{q_m'(n)},
$$
where $q_m'(n)$ is the supremum of all $q \geq 1$ for which \eqref{eq:HY-threshold} holds for all functions $f: \{0,1,\cdots,n-1\}\rightarrow\R$. Let $q = q_m'(n)$. We prove that \eqref{eq:HY-threshold} holds for all complex-valued functions $f$ with $|\supp f| \leq n$. Indeed, by replacing $f$ by $|f|$, we may assume that $f$ takes nonnegative real values. By the rearrangement inequality, we may assume that $f$ is supported on an interval of $n$ consecutive integers. By translation invariance, we may assume that $f$ is supported on $\{0,1,\cdots,n-1\}$. This proves that $q_m(n) = q_m'(n)$ as desired.
\end{proof}

\end{document}